\newcommand{\vpermuton}{{\setgcscale{0.16}\setgcextra{\draw[very thin](0,0)rectangle(2,2);}\gcone{0}{-2,2}}}
\title{\textbf{Independence of permutation limits \\at infinitely many scales}}
\author{$\phantom{{}^\dagger}$David Bevan${}^\dagger$}
\date{}
\begin{document}
\maketitle

{\begin{NoHyper}
\let\thefootnote\relax\footnotetext
{${}^\dagger$Department of Mathematics and Statistics, The University of Strathclyde, Glasgow, Scotland.}
\end{NoHyper}}

{\begin{NoHyper}
\let\thefootnote\relax\footnotetext
{2020 Mathematics Subject Classification:
05A05, %, % permutations %(primary),
%05A15, % exact enumeration, generating functions %(secondary).
%05A16. % asymptotic enumeration %(secondary).
60C05. % combinatorial probability
}
\end{NoHyper}}

\begin{abstract}
\noindent
We introduce a new natural notion of convergence for permutations at any specified scale, in terms of the density of patterns of restricted width.
%along with a stricter notion of scalable convergence in which the choice of scale is immaterial.
In this setting we prove that %asymptotic
limits may be chosen independently at a countably infinite number of scales.
%\smallskip
%\noindent
%We illustrate our result with two examples.
\HIDE{
Firstly, we exhibit a sequence of permutations $(\zeta_j)$ such that, for each irreducible $p/q\in\bbQ\+\cap\+(0,1]$,
a %fixed-length
subpermutation of $\zeta_j$ of width at most $|\zeta_j|^{p/q}$ is
a.a.s. increasing if $q$ is odd, and is
a.a.s. decreasing if $q$ is even.
In the second, we construct a sequence of permutations $(\eta_j)$ such that,
for every skinny monotone grid class $\Grid(\mathbf{v})$, there is a function $f_\mathbf{v}$ such that
any %fixed-length
subpermutation of $\eta_j$ of width at most $f_\mathbf{v}(|\eta_j|)$ is a.a.s. in $\Grid(\mathbf{v})$.
}
\end{abstract}

% --------------------------------
\section{Introduction}

We study pattern densities in permutations.
Let $S_n$ denote the set of permutations of length~$n$.
An \emph{occurrence} of pattern $\pi\in S_k$ in permutation $\sigma\in S_n$ (with $k\leqs n$)
is a $k$-element subset of indices $1\leqs i_1\leqs \ldots \leqs i_k\leqs n$ whose image $\sigma(i_1)\ldots\sigma(i_k)$
under $\sigma$ is order-isomorphic to~$\pi$.
If $\pi$ occurs in $\sigma$, then $\pi$ is a \emph{subpermutation} of~$\sigma$.
For example, $132$ is a subpermutation of $35142$, since $35142$ contains two occurrences of the pattern $132$.

Let $\nu(\pi,\sigma)$ be the number occurrences of $\pi$ in~$\sigma$.
Then the \emph{global density} of $\pi$ in $\sigma$, which we denote $\rho(\pi,\sigma)$, is $\nu(\pi,\sigma)\big/\binom{n}{k}$.
Observe that
$
\rho(\pi,\sigma) = \prob{\sigma(K) = \pi}
$,
where $K$ is drawn uniformly from the $k$-element subsets of~$[n]$, and $\sigma(K)$ denotes the permutation order-isomorphic to the image of $K$ under~$\sigma$.

We say that an occurrence $i_1\leqs\ldots\leqs i_k$ of $\pi$ in $\sigma$, has \emph{width} $i_k-i_1+1$.
Given a real number $f\in[k,n]$, let $\nu_f(\pi,\sigma)$ be the number of occurrences having width no greater than~$f$.
Then the density of $\pi$ in $\sigma$ \emph{at scale $f$}, denoted $\rho_f(\pi,\sigma)$, is $\nu_f(\pi,\sigma)\big/\binom{n}{k}_{f}$,
where
\[
\binom{n}{k}_{\!f} \;=\; \sum_{w=k}^{\floor{f}}(n - w + 1)\binom{w-2}{k-2}
\]
is the number of $k$-element subsets of $[n]$ of width at most~$f$. Thus,
$
\rho_f(\pi,\sigma) = \prob{\sigma(K) = \pi}
$,
where $K$ is drawn uniformly from the $k$-element subsets of $[n]$ of no greater than~$f$.
Clearly, $\rho_{n}(\pi,\sigma)$ is the same as~$\rho(\pi,\sigma)$.
And $\rho_{k}(\pi,\sigma)= \nu_{k}(\pi,\sigma) / (n - k + 1)$ is the density of \emph{consecutive} occurrences of the pattern $\pi$ in~$\sigma$; namely, the \emph{local density} of $\pi$ in~$\sigma$.

The scale $f$ can be envisaged as specifying a ``zoom level'' or ``magnification'': the horizontal extent
%aperture
of a window through which we inspect a permutation.
We typically consider the scale $f$ to be a function $f(n)$ of the length, $n$, of the host permutation~$\sigma$, such as ${\log n}$, ${\sqrt{n}}$ or ${n/\log n}$.
With a slight abuse of notation, we omit the argument when it is clear from the context.

We say that a function $f:\bbN\to\bbR^+$ is a \emph{scaling function} if $1\ll f(n)\ll n$ and $f(n)\leqs n$ for all $n$, where
we write $f(n)\ll g(n)$ %or $g(n)\gg f(n)$
to denote that $\liminftyt f(n)/g(n) = 0$.
%We also write $f(n)\sim g(n)$ if $\liminftyt f(n)/g(n) = 1$.
Note that we require scaling functions to tend to infinity and also to be sublinear.
Our interest is in the behaviour of pattern density at different scales as $n$ tends to infinity.

Two recent papers have investigated the density of patterns at different scales.
In~\cite{BevanLocallyUniform}, the following scenario is considered.
Suppose $\sigma$ is drawn uniformly from those permutations in $S_n$ containing exactly $m$ \emph{inversions} ($21$ patterns); that is,
$\nu(21,\sigma) = m$. Moreover, suppose that $m=m(n)$ satisfies $n\ll m\ll n^2/\log^2 n$.
Then clearly $\rho(21,\sigma)\ll 1/\log^2 n \to0$, and indeed it is shown that $\rho_f(21,\sigma)\to0$ as long as $f\gg m/n$.
However, at smaller scales, two points are as likely to form an inversion as not: if $f\ll m/n$, then $\rho_f(21,\sigma)\to\nhalf$.
Thus, the local structure of $\sigma$ reveals nothing about its global form.

Borga and Penaguiao~\cite{BP2020} consider the general relationship between asymptotic global pattern densities and asymptotic local pattern densities,
and prove that they are \emph{independent} in the following sense:
Given a set of patterns $G$
and any consistent combination of their asymptotic global pattern densities %$\Gamma_\pi\in[0,1]$, $\pi\in G$,
$\Gamma\in[0,1]^G$,
and similarly a set of consecutive patterns $L$
and any consistent combination of their asymptotic local pattern densities %$\Lambda_\tau\in[0,1]$, $\tau\in L$,
$\Lambda\in[0,1]^L$,
then there exists a sequence of permutations $(\sigma_j)_{j\in\bbN}$ such that
$\rho(\pi,\sigma_j)\to\Gamma_\pi$ for each $\pi\in G$,
and $\rho_{|\tau|}(\tau,\sigma_j)\to\Lambda_\tau$ for each $\tau\in L$.

Our main result is that
this independence can be extended to infinitely many scales.
If any consistent combination of asymptotic pattern densities is chosen for each of a countably infinite number of suitably distinct scales, then
there exists a sequence of permutations for which all the limiting densities at each scale match the choices.
%Moreover (Theorem~\ref{thm2DIndependence}), ... 2D ...

In the next section we look at various notions of convergence for permutations.
To begin, we recall the basic results concerning the global convergence of a sequence of permutations. % to a \emph{permuton}.
We then introduce and investigate an approach to defining convergence at a specified scale, and also present a stricter notion of convergence in which the choice of scale is irrelevant.
Finally, we briefly recall the essential results concerning local convergence.

In Section~\ref{sectScaleSpecificLimits}, we prove a number of results concerning convergence at a given scale, most notably (Theorem~\ref{thmLimScaleAny}) that
scale limits can be limits at any scale.
Then, in Section~ \ref{sectIndependence}, building on these results, we prove our main theorem (Theorem~\ref{thmIndependence}) showing asymptotic independence at a countably infinite number of scales. After briefly presenting two example constructions, we extend this result to two dimensions (Theorem~\ref{thm2DIndependence}).

% --------------------------------
\section{Notions of convergence}\label{sectNotionsOfConvergence}

% ------------------------
\subsection{Global convergence}\label{sectGlobalConv}

An infinite sequence $(\sigma_j)_{j\in\bbN}$ of permutations with $|\sigma_j|\to\infty$ is \emph{globally convergent} if $\rho(\pi,\sigma_j)$ converges for every permutation $\pi$.
To every convergent sequence of permutations one can associate an analytic limit
object. A \emph{permuton} is a probability measure $\Gamma$ on the $\sigma$-algebra of Borel sets of the unit square $[0,1]^2$ such that $\Gamma$ has \emph{uniform marginals}. That is, for every interval $[a,b]\subseteq[0,1]$, we have $\Gamma\big([a,b]\times[0,1]\big)=\Gamma\big([0,1]\times[a,b]\big)=b-a$.

Given a permuton $\Gamma$ and an integer $k$, we can randomly sample $k$ points $(x_1,y_1),\ldots,(x_k,y_k)$ in $[0,1]^2$ from the measure $\Gamma$.
With probability one, their $x$- and $y$-coordinates are distinct, because $\Gamma$ has uniform marginals.
So, from these points, we can define a permutation $\pi$ as follows. If we list the $x$-coordinates in increasing order $x_{i_1}<\ldots<x_{i_k}$, then $\pi$ is the unique permutation order-isomorphic to $y_{i_1}y_{i_2}\ldots y_{i_k}$.
We say that a permutation sampled in this way from a permuton $\Gamma$ is a \emph{$\Gamma$-random permutation} of length $k$.

This sampling approach is used to define a notion of pattern density for permutons.
If $\Gamma$ is a permuton and $\pi$ is a permutation of length $k$, then $\rho(\pi,\Gamma)$ is
the probability that a $\Gamma$-random permutation of length $k$ equals $\pi$.

We now recall the core results from~\cite{HKMRS2013,HKMS2011b}.
For every convergent sequence $(\sigma_j)_{j\in\bbN}$ of
permutations, there exists a unique permuton $\Gamma$ such that
\[
\rho(\pi,\Gamma) \;=\; \liminfty[j] \rho(\pi,\sigma_j) \text{~~for every permutation $\pi$} .
\]
This permuton is the \emph{limit} of the sequence $(\sigma_j)_{j\in\bbN}$.

Conversely, if $\Gamma$ is a permuton and, for each~$j\in\bbN$, $\sigma_j$ is a $\Gamma$-random permutation of length $j$, then with
probability one the sequence $(\sigma_j)_{j\in\bbN}$ is convergent, and $\Gamma$ is its limit.
We call such a sequence (that converges to $\Gamma$) a \emph{$\Gamma$-random sequence}.

Permutons were introduced in \cite{HKMRS2013,HKMS2011b,HKMS2011} employing a different but equivalent definition.
The measure theoretic view presented above was originally used in~\cite{PS2010},
and was later exploited in \cite{GGKK2015}, in which the term ``permuton'' was first used.
Subsequently, among other things, permutons have been applied
to the investigation of the {feasible region} of possible pattern densities~\cite{GHKKKL2017,KKRW2020},
to the characterisation of {quasirandom} permutations~\cite{KP2013,CKNPSV2020},
and to determining the shapes of permutations in substitution-closed permutation classes~\cite{BBFGP2018,BBFGMP2020} (introducing the notion of the random Brownian separable permuton~\cite{Maazoun2020}).
%\cite{HRS2017}

% ------------------------
\subsection{Convergence at specified scales}

In an analogous manner to the definition of global convergence, we introduce a notion of convergence at a specified scale.
Given a scaling function $f$, an infinite sequence $(\sigma_j)_{j\in\bbN}$ of permutations with $|\sigma_j|\to\infty$ is \emph{convergent at scale $f$} if $\rho_{f}(\pi,\sigma_j)$ converges for every permutation~$\pi$.

If $(\sigma_j)_{j\in\bbN}$ is convergent at scale $f$, then there exists an infinite vector $\Xi\in[0,1]^S$ (where $S$ is the set of all permutations) such that $\rho_{f}(\pi,\sigma_j)\to\Xi_\pi$ for all $\pi\in S$.
Note that, for any $k\geqs1$, we have $\sum_{\pi\in S_k}\Xi_\pi=1$.
In the current context, we consider $\Xi$ itself to be the {limit} of the sequence at scale $f$.
We call these limits \emph{scale limits}.
We
prove below (Theorem~\ref{thmLimScaleAny}) that scale limits can be limits at any scale in the following sense:
If $\Xi$ is any scale limit and $f$ any scaling function, then there exists a sequence of permutations which converges at scale $f$ to~$\Xi$.

Sometimes there exists a (unique) permuton $\Gamma$ such that $\rho(\pi,\Gamma)=\Xi_\pi$ for every $\pi$.
If such a $\Gamma$ does exist, then we interchangeably deem either $\Xi$ or $\Gamma$ to be the limit.
However, in general this is not the case.

Suppose $\Gamma_\textsf{V}$ is the \textsf{V}-shaped permuton %that looks like
$\!\vpermuton\!$,
in which the mass is uniformly distributed along the decreasing diagonal of the left half
and uniformly distributed along the increasing diagonal of the right half.
Suppose $(\sigma_j)_{j\in\bbN}$ is a $\Gamma_\textsf{V}$-random sequence and $f$ is a scaling function.
If~$K$ is drawn uniformly from subsets of $[j]$ of width no greater than $f(j)$, then the probability that $\sigma_j(K)$ is not monotone is no greater than $(f(j)-2)/(j+1-f(j))\sim f(j)/j$, which tends to zero with increasing~$j$.
So, $\rho_f(\pi,\sigma_j)\to\nhalf$ if $\pi$ is an increasing or decreasing pattern (by the law of large numbers), and $\rho_f(\pi,\sigma_j)\to0$ for all other~$\pi$.

Note that this limit is not equal to \emph{any} permuton.
Specifically, there is no permuton $\Gamma$ such that $\rho(12,\Gamma)=\rho(21,\Gamma)=\nhalf$ but \mbox{$\rho(\pi,\Gamma)=0$} for all non-monotone $\pi\in S_3$.
Indeed, it seems natural to consider the limit of a \mbox{$\Gamma_\textsf{V}$-random} sequence at scale $f$ to be, in some sense, equal to $\half\!\gcone{1}{1}\!+\half\!\gcone{1}{-1}\!$.
In general, we believe that certain probability distributions over permutons (that is, certain \emph{random permutons}) should suffice to model
{scale limits}.
However, the characterisation of these limits is beyond the scope of this paper.
% *** TO DO: Extend example with continuous random permuton? Similar random permutons have been considered in~\cite{BS2020}. ***
\begin{question}
  Can all scale permutation limits be represented by random permutons?
  If so, which random permutons are scale limits?
\end{question}
We also postpone to future work any consideration of the \emph{packing density} of patterns at a specified scale (see~\cite{PS2010,SS2018}) and, more generally, of the \emph{feasible region} for pattern densities at a specified scale (see~\cite{KKRW2020,BP2020}).

% ------------------------
\subsection{Scalable convergence}

We now briefly introduce a stricter notion of convergence in which the choice of scale is immaterial.
We say that an infinite sequence $(\sigma_j)_{j\in\bbN}$ of permutations with $|\sigma_j|\to\infty$ is \emph{scalably convergent} if, for every permutation $\pi$, there exists $\rho_\pi$ such that
$\rho_f(\pi,\sigma_j)$ converges to $\rho_\pi$ for every scaling function $f$.
% That is, the sequence is convergent at every scale to a single limit $\Xi$, except perhaps globally and locally.
We call a limit of a scalably convergent sequence a \emph{scalable limit}.
Let us consider some scalable limits which can be represented by permutons.

We say that a permuton is \emph{tiered} if it can be partitioned into a finite or countably infinite number of rectangular horizontal \emph{tiers} $[0,1]\times[a,b]$
such that in each tier the mass is uniformly distributed either on the whole tier or else along the increasing or decreasing diagonal of the tier.
See Figure~\ref{figTieredPermutons} for an illustration of some examples.%\footnote{Our convention is that $(0,0)$ is at the lower left in these figures.}

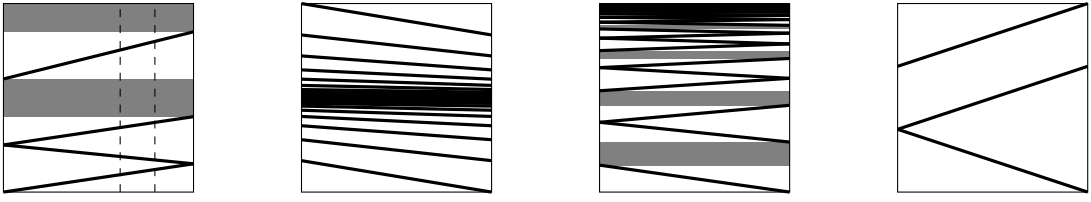
\begin{figure}[t] %[ht]
\begin{center}
\begin{tikzpicture}[scale=2.5]
\fill[gray] (0,.4) rectangle (1,.6);
\fill[gray] (0,.85) rectangle (1,1);
%\draw[dotted] (0,0.15)--(1,0.15);
%\draw[dotted] (0,0.25)--(1,0.25);
\draw[very thick] (0,0)--(1,0.15);
\draw[very thick] (0,0.25)--(1,0.15);
\draw[very thick] (0,0.25)--(1,0.4);
\draw[very thick] (0,0.6)--(1,0.85);
\draw[thin] (0,0) rectangle (1,1);
\draw[thin,dashed] (.6141,0)--(.6141,1);
\draw[thin,dashed] (.7967,0)--(.7967,1);
\end{tikzpicture}
$\qquad\quad$
\begin{tikzpicture}[scale=2.5]
\draw[very thick] (0,.5+1/2)--(1,.5+.666667/2);
\draw[very thick] (0,.5+.666667/2)--(1,.5+.444444/2);
\draw[very thick] (0,.5+.444444/2)--(1,.5+.296296/2);
\draw[very thick] (0,.5+.296296/2)--(1,.5+.197531/2);
\draw[very thick] (0,.5+.197531/2)--(1,.5+.131687/2);
\draw[very thick] (0,.5+.131687/2)--(1,.5+.0877915/2);
\draw[very thick] (0,.5+.0877915/2)--(1,.5+.0585277/2);
\draw[very thick] (0,.5+.0585277/2)--(1,.5+.0390184/2);
\draw[very thick] (0,.5+.0390184/2)--(1,.5+.0260123/2);
\draw[very thick] (0,.5+.0260123/2)--(1,.5+.0173415/2);
\draw[very thick] (0,.5+.0173415/2)--(1,.5+.011561/2);
\draw[very thick] (0,.5+.011561/2)--(1,.5+.00770735/2);
\draw[very thick] (0,.5+.00770735/2)--(1,.5+.00513823/2);
\draw[very thick] (1-0,.5-1/2)--(1-1,.5-.666667/2);
\draw[very thick] (1-0,.5-.666667/2)--(1-1,.5-.444444/2);
\draw[very thick] (1-0,.5-.444444/2)--(1-1,.5-.296296/2);
\draw[very thick] (1-0,.5-.296296/2)--(1-1,.5-.197531/2);
\draw[very thick] (1-0,.5-.197531/2)--(1-1,.5-.131687/2);
\draw[very thick] (1-0,.5-.131687/2)--(1-1,.5-.0877915/2);
\draw[very thick] (1-0,.5-.0877915/2)--(1-1,.5-.0585277/2);
\draw[very thick] (1-0,.5-.0585277/2)--(1-1,.5-.0390184/2);
\draw[very thick] (1-0,.5-.0390184/2)--(1-1,.5-.0260123/2);
\draw[very thick] (1-0,.5-.0260123/2)--(1-1,.5-.0173415/2);
\draw[very thick] (1-0,.5-.0173415/2)--(1-1,.5-.011561/2);
\draw[very thick] (1-0,.5-.011561/2)--(1-1,.5-.00770735/2);
\draw[very thick] (1-0,.5-.00770735/2)--(1-1,.5-.00513823/2);
\draw[thin] (0,0) rectangle (1,1);
\end{tikzpicture}
$\qquad\quad$
\begin{tikzpicture}[scale=2.5]
\fill[gray] (0,0.142857) rectangle (1,0.265306);
\fill[gray] (0,0.460225) rectangle (1,0.537336);
\fill[gray] (0,0.708643) rectangle (1,0.750265);
\fill[gray] (0,0.865199) rectangle (1,0.884457);
\fill[gray] (0,0.946542) rectangle (1,0.954179);
\fill[gray] (0,0.981829) rectangle (1,0.984425);
\draw[very thick] (0,0.142857)--(1,0.);
\draw[very thick] (0,0.370262)--(1,0.265306);
\draw[very thick] (0,0.370262)--(1,0.460225);
\draw[very thick] (0,0.537336)--(1,0.603431);
\draw[very thick] (0,0.660083)--(1,0.603431);
\draw[very thick] (0,0.660083)--(1,0.708643);
\draw[very thick] (0,0.750265)--(1,0.785942);
\draw[very thick] (0,0.816521)--(1,0.785942);
\draw[very thick] (0,0.816521)--(1,0.842733);
\draw[very thick] (0,0.865199)--(1,0.842733);
\draw[very thick] (0,0.900963)--(1,0.884457);
\draw[very thick] (0,0.900963)--(1,0.915111);
\draw[very thick] (0,0.927238)--(1,0.915111);
\draw[very thick] (0,0.927238)--(1,0.937633);
\draw[very thick] (0,0.946542)--(1,0.937633);
\draw[very thick] (0,0.960725)--(1,0.954179);
\draw[very thick] (0,0.960725)--(1,0.966336);
\draw[very thick] (0,0.971145)--(1,0.966336);
\draw[very thick] (0,0.971145)--(1,0.975267);
\draw[very thick] (0,0.9788)--(1,0.975267);
\draw[very thick] (0,0.9788)--(1,0.981829);
\draw[very thick] (0,0.984425)--(1,0.98665);
\draw[very thick] (0,0.988557)--(1,0.98665);
\draw[very thick] (0,0.988557)--(1,0.990192);
\draw[thin] (0,0) rectangle (1,1);
\end{tikzpicture}
$\qquad\quad$
\begin{tikzpicture}[scale=2.5]
\draw[very thick] (0,0.333)--(1,0);
\draw[very thick] (0,0.333)--(1,0.667);
\draw[very thick] (0,0.667)--(1,1);
\draw[thin] (0,0) rectangle (1,1);
\end{tikzpicture}
\caption{Some tiered permutons; the dashed lines delineate a vertical strip}\label{figTieredPermutons}
\end{center}
\end{figure}

Tiered permutons have the property that any vertical strip is equivalent to the whole permuton, in the following sense.
Suppose $\Gamma$ is a tiered permuton and consider a vertical strip $[a,b]\times[0,1]$ of $\Gamma$, as delineated by the dashed lines in the example at the left of Figure~\ref{figTieredPermutons}.
Now let $\Gamma_{[a,b]}$ be the permuton that results from rescaling this strip to fill $[0,1]^2$ in such a way that the result has a total mass of one and uniform marginals, and is thus a valid permuton.
This requires horizontal expansion by a factor of $1/(b-a)$ and the vertical expansion of each line segment so as to become a diagonal of its tier.
Formally, the following defines $\Gamma_{[a,b]}$:
\[
\Gamma_{[a,b]} \big([0,x] \,\times\, [0,y]\big) \;=\;
\tfrac{1}{b-a} \Gamma\big([a,a+x(b-a)] \,\times\, [0,h]\big),
\]
for any $(x,y)\in[0,1]^2$, where $h$ is any solution of the equation
$\Gamma\big([a,b] \times [0,h]\big) = (b-a)y$.
Given any tiered permuton $\Gamma$ and interval $[a,b]\subseteq[0,1]$, it is easy to see that $\Gamma_{[a,b]}=\Gamma$.

In particular, $\Gamma_{[a,a+f(n)/n]}=\Gamma$ for any scaling function $f$, all $a\in[0,1)$, and all $n$ large enough that $a+f(n)/n\leqs1$. %such that $f(n)/n\leqs 1-a$.
Thus, for any scaling function $f$ and every pattern $\pi$, we have $\rho_f(\pi,\Gamma)=\rho(\pi,\Gamma)$.
Hence, if $\Gamma$ is tiered, every $\Gamma$-random sequence is scalably convergent to its global limit $\Gamma$.
We believe that tiered permutons are the only permutons with this property:
\begin{conj}\label{conjTiered}
  If $\Gamma$ is a permuton for which every $\Gamma$-random sequence is scalably convergent, then $\Gamma$ is tiered.
\end{conj}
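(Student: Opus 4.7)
My plan is to translate the scalable-convergence hypothesis into a self-similarity property of $\Gamma$ and then extract the tier structure from it. For a $\Gamma$-random sequence $(\sigma_n)_{n\in\bbN}$ and a scaling function $f$ with $\epsilon_n:=f(n)/n\to 0$, sampling a uniformly random occurrence of width at most $f(n)$ in $\sigma_n$ is asymptotically equivalent to first drawing a uniformly random $A\in[0,1-\epsilon_n]$ and then sampling a $\Gamma_{[A,A+\epsilon_n]}$-random pattern. Applying scalable convergence to every pattern $\pi$ and every scaling function $f$ yields
\[
\int_0^{1-\epsilon} \rho(\pi, \Gamma_{[a, a+\epsilon]}) \, da \;\to\; \rho(\pi, \Gamma) \quad \text{as } \epsilon\to 0^+ .
\]
A measure-theoretic argument — exploiting that the condition must hold for \emph{all} scaling functions $f$, together with a Lebesgue-differentiation type step — then promotes this to the pointwise statement that for almost every $a\in[0,1)$, $\Gamma_{[a, a+\epsilon]} \to \Gamma$ weakly in the permuton topology as $\epsilon\to 0^+$.

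Next I would disintegrate $\Gamma = \int_0^1 \delta_x \otimes \Gamma_x\, dx$ against its uniform $x$-marginal, decomposing each probability measure $\Gamma_x$ on $[0,1]$ into its absolutely continuous part $\Gamma_x^{\mathrm{ac}}$ and its atomic part $\Gamma_x^{\mathrm{at}}$. The plan is to use self-similarity to force the following rigid structure: there is a (possibly countable) collection of disjoint subintervals of $[0,1]$ on which $\Gamma_x^{\mathrm{ac}}$ has density $1$ independent of $x$ (the wide tiers); and countably many atoms whose positions trace affine trajectories $y=\alpha_i+\beta_i x$ as $x$ ranges over $[0,1]$, with respective masses $|\beta_i|$ (the diagonal tiers). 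The wide-tier claim is the easier half: if the density of $\Gamma_x^{\mathrm{ac}}$ varied nontrivially in $x$ on some region, then a suitable three-element pattern localised there (for instance $132$ versus $213$) would have strip density differing from its global value. The crux is affineness of the atom trajectories: if an atom path $x\mapsto y(x)$ were nonlinear, then for small $\epsilon$ the rescaled strip $\Gamma_{[a, a+\epsilon]}$ would replace the path by a nearly straight segment whose effective slope depends on $a$, which is incompatible with the limit $\Gamma_{[a,a+\epsilon]}\to\Gamma$ unless all slopes agree.

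Finally, I would assemble the pieces. Each affine atom path $y=\alpha_i+\beta_i x$ occupies the horizontal strip $[0,1]\times[\alpha_i, \alpha_i+\beta_i]$ (taking $\beta_i>0$; analogous for $\beta_i<0$), and each interval $[c_j, d_j]$ on which $\Gamma_x^{\mathrm{ac}}$ is uniformly distributed corresponds to a wide tier $[0,1]\times[c_j, d_j]$. Uniform marginals force these horizontal strips to be pairwise disjoint on the $y$-axis and to cover $[0,1]$, thereby identifying $\Gamma$ as a tiered permuton.

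The hardest step is the rigidity argument in the middle — promoting self-similarity of the rescaled strips into affineness of the atom trajectories and constancy of the absolutely continuous density. This essentially requires extracting regularity (local Lipschitz behaviour, then affineness) of the trajectories from a purely measure-theoretic hypothesis. A plausible route is a bootstrap over pattern length: use the densities of short patterns such as $12$, $21$, $123$, $132$, $213$, $321$ at all sufficiently small scales to pin down first the one-point and then the two-point correlations of each trajectory, and then reduce the problem to a functional equation whose only measurable solutions are affine.
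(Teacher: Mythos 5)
This statement is Conjecture~\ref{conjTiered}: the paper offers no proof of it, so there is nothing to compare your argument against --- and your proposal, as written, is a programme rather than a proof. Two gaps are decisive. First, the step from the averaged identity $\int_0^{1-\epsilon}\rho(\pi,\Gamma_{[a,a+\epsilon]})\,da\to\rho(\pi,\Gamma)$ to the pointwise statement that $\Gamma_{[a,a+\epsilon]}\to\Gamma$ for almost every $a$ is not justified. A uniformly random $k$-subset of width at most $f(n)$ has a uniformly random position, so for every scaling function $f$ the quantity $\rho_f(\pi,\sigma_n)$ is intrinsically an average over window positions; varying $f$ changes only the window width $\epsilon_n$, never localises the position $a$, so the hypothesis only ever yields first moments in $a$. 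Knowing that averages of a family of functions converge to a constant does not give pointwise convergence of the family, and Lebesgue differentiation does not apply here (you are not integrating a fixed $L^1$ function over shrinking intervals about a point). This promotion is close to the entire content of the conjecture, not a routine measure-theoretic step. Second, you yourself flag the rigidity argument (affineness of atom trajectories, constancy of the absolutely continuous density) as ``a plausible route''; nothing there is carried out, and your Lebesgue decomposition of the conditional measures $\Gamma_x$ into absolutely continuous plus atomic parts omits the singular continuous part entirely --- ruling out singular continuous mass, densities other than $0$ or $1$, atoms of $x$-dependent mass, and overlap in $y$ between diagonal and uniform pieces is exactly the hard content, and none of it follows from the two- and three-point pattern comparisons you sketch.

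There is also a point about the hypothesis itself. As literally stated, scalable convergence only asserts that $\rho_f(\pi,\sigma_j)$ has \emph{some} limit $\rho_\pi$ for every scaling function $f$; your first display silently assumes $\rho_\pi=\rho(\pi,\Gamma)$. That stronger reading (scalably convergent \emph{to its global limit} $\Gamma$, as in the sentence preceding the conjecture) is in fact forced: the paper's $\Gamma_{\textsf{V}}$ example is a non-tiered permuton all of whose random sequences are scalably convergent, but to a limit different from $\Gamma_{\textsf{V}}$, and Conjecture~\ref{conjGammaRandomScalablyConv} would make the literal reading vacuous. You should state explicitly that you are proving the ``converges to $\Gamma$ at every scale'' version; with the literal reading your opening identity is unavailable and the claim you set out to prove is false. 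Finally, the asymptotic equivalence between width-bounded index windows in a $\Gamma$-random $\sigma_n$ and samples from the rescaled strip $\Gamma_{[A,A+\epsilon_n]}$ needs an actual concentration argument (empirical $x$-ranks versus positions), and $\Gamma_{[a,b]}$ must be shown to be well defined for general permutons, where the solution $h$ in its defining equation need not exist or be unique when the strip measure has atoms in $y$.
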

More generally, it seems likely that scalable limits can be characterised as probability distributions over tiered permutons:
\begin{question}
  Can all scalable permutation limits be represented by random tiered permutons?
  If so, which random tiered permutons are scalable limits?
\end{question}

Note that there exist sequences of permutations that converge to a tiered permuton but which are not scalably convergent.
For example, from~\cite{BevanLocallyUniform} we know that if we let $\sigma_j$ be drawn uniformly at random from those permutations of length $j^2$ that have $j^3$ inversions, then, with probability one, $(\sigma_j)_{j\in\bbN}$ converges to the increasing permuton $\!\gcone{1}{1}\!$, but is not scalably convergent;
indeed, at any scale $f\ll n^{1/4}$, the scale limit is the uniform permuton.

Recall that a \mbox{$\Gamma_\textsf{V}$-random} sequence converges to the same limit at any scale $f$, as long as $1\ll f\ll n$.
Thus a \mbox{$\Gamma_\textsf{V}$-random} sequence is scalably convergent, but its scalable limit is not equal to~$\Gamma_\textsf{V}$.
It seems reasonable to believe that the fact that $\Gamma_\textsf{V}$-random sequences are scalably convergent is not due to any specific properties of $\Gamma_\textsf{V}$:
\begin{conj}\label{conjGammaRandomScalablyConv}
  If $\Gamma$ is any permuton, then every $\Gamma$-random sequence is scalably convergent.
\end{conj}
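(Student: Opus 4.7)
My plan is to define, for each permuton $\Gamma$ and Lebesgue-almost every $a \in [0,1]$, a \emph{tangent permuton} $\Gamma_a$ as the pattern-density limit of the strip permutons $\Gamma_{[a,\,a+\epsilon]}$ as $\epsilon \to 0^+$, and then to show that for any scaling function $f$, any pattern $\pi$, and any $\Gamma$-random sequence $(\sigma_j)$,
\[
\rho_f(\pi, \sigma_j) \;\longrightarrow\; \int_0^1 \rho(\pi, \Gamma_a)\,da \qquad\text{almost surely.}
\]
Since the right-hand side depends only on $\Gamma$ and not on $f$, this gives scalable convergence, with scalable limit $\rho_\pi = \int_0^1 \rho(\pi, \Gamma_a)\,da$.

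For the existence of $\Gamma_a$, I would use the disintegration $\Gamma(dx\,dy) = dx \otimes \mu_x(dy)$, with $\mu_x$ the conditional distribution of $y$ given $x$, to write, for each pattern $\pi$ of length $k$,
\[
\rho\big(\pi,\, \Gamma_{[a,\,a+\epsilon]}\big) \;=\; \frac{1}{\epsilon^k}\int_{[a,\,a+\epsilon]^k} g_\pi(x_1,\ldots,x_k)\,dx_1\cdots dx_k,
\]
where $g_\pi(x_1,\ldots,x_k)$ is the probability that independent samples $Y_i \sim \mu_{x_i}$ induce rank pattern $\pi$ in the order of increasing $x_i$. A Lebesgue-differentiation-type argument—for instance, a dyadic martingale in $a$ combined with a maximal inequality to pass from dyadic to arbitrary $\epsilon \to 0^+$—should give convergence of this cube average for a.e.\ $a$; intersecting over the countably many patterns then produces a single measurable permuton $\Gamma_a$ that serves as the tangent permuton at $a$.

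With the tangent permuton in hand, the scale-$f$ convergence follows fairly standard lines. Choosing a $k$-subset of $[j]$ uniformly among those of width at most $f(j)$ is asymptotically the same, as $j \to \infty$ with $f(j)/j \to 0$, as choosing a uniformly random window $[A,\, A + f(j)/j] \subseteq [0,1]$ and conditioning $k$ independent $\Gamma$-samples to have their $x$-coordinates in this window. Hence $\mathbb{E}[\rho_f(\pi, \sigma_j)] \to \int_0^1 \rho(\pi, \Gamma_a)\,da$ by dominated convergence, independently of $f$. A second-moment estimate, in which two well-separated windows sample from essentially independent portions of the underlying $\Gamma$-samples (mirroring the classical proof of almost-sure global convergence for $\Gamma$-random sequences), then upgrades convergence in expectation to almost-sure convergence.

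The main obstacle is the first step. The diagonal $\{(a,\ldots,a) : a \in [0,1]\}$ is a null set in $[0,1]^k$, so the standard multidimensional Lebesgue differentiation theorem does not directly imply a.e.-$a$ convergence of the cube average of $g_\pi$. Examples such as the increasing permuton, where $\mu_x = \delta_x$ and $g_\pi$ cannot even be meaningfully evaluated on the diagonal by pointwise evaluation of $\mu_{x_1} \otimes \cdots \otimes \mu_{x_k}$, show that the argument cannot appeal to continuity of $g_\pi$ at the diagonal and must genuinely exploit the cube averaging. Carrying out this diagonal differentiation, in particular handling atomic and singular disintegrations uniformly, is the technical heart of the proof and the step I would expect to require the most care.
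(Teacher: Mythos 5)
The statement you are trying to prove is not proved in the paper at all: it is stated as Conjecture~\ref{conjGammaRandomScalablyConv} and left open, so there is no proof of the author's to compare against, and your proposal would need to stand entirely on its own. As it stands it does not, because the step you yourself flag as ``the technical heart'' --- the a.e.\ existence of the tangent permuton $\Gamma_a$, i.e.\ convergence of the diagonal cube averages $\epsilon^{-k}\int_{[a,a+\epsilon]^k} g_\pi$ as $\epsilon\to0^+$ --- is precisely where the whole difficulty of the conjecture sits, and the mechanism you sketch for it does not obviously work. The dyadic averages over cubes $I^k$ (with $I$ the dyadic interval of generation $m$ containing $a$) do \emph{not} form a martingale with respect to the dyadic filtration on $[0,1]$: the average of $g_\pi$ over the parent cube $J^k$ is a convex combination over all $2^k$ products of children of $J$, of which only the two diagonal cubes are of the form $I^k$, so conditioning does not reproduce the diagonal cube average and martingale convergence cannot be invoked as stated. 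Nor does any standard differentiation basis help, since, as you note, the diagonal is null in $[0,1]^k$. Worse, there is reason to doubt that the pointwise limit exists at all for general $\Gamma$: permutons obtained as limits of infinite iterated substitutions whose orientation alternates across scales (in the spirit of the paper's own scale-dependent constructions in Section~\ref{sectExamples}) are natural candidates for which the diagonal cube averages oscillate log-periodically in $\epsilon$ and have no limit at any $a$; whether averaging over $a$ (which is all the conjecture actually requires) restores convergence is exactly the open question, and your plan assumes it rather than addresses it.

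Two further points, more minor but not cosmetic. First, even granting the existence of $\Gamma_a$ a.e., your reduction identifies a window of $f(j)$ consecutive \emph{indices} of $\sigma_j$ with an $x$-interval of width $f(j)/j$ in $[0,1]$; this is fine because the relative fluctuation of the spanned $x$-width is $O(1/\sqrt{f(j)})\to0$, but it needs to be said, and the width distribution under the ``width at most $f$'' measure (Proposition~\ref{propNKFAsymptotics}) must be handled rather than assumed concentrated. Second, scalable convergence quantifies over \emph{all} scaling functions $f$, an uncountable family, so you cannot prove an almost-sure statement for each fixed $f$ and intersect; you need an almost-sure statement that is uniform over all admissible window widths (e.g.\ a union bound over the at most $j$ integer widths at each length $j$ together with summable concentration rates, or a uniform-in-$\delta$ law of large numbers). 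That part looks repairable by standard second- or fourth-moment arguments, but the diagonal differentiation step is a genuine gap, and until it is either proved or circumvented the proposal is a programme rather than a proof of the conjecture.
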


% ------------------------
\subsection{Local convergence}

We conclude this section with a very brief foray into local convergence, the theory of which was recently developed by Borga in~\cite{Borga2020}.
An infinite sequence $(\sigma_j)_{j\in\bbN}$ of permutations with $|\sigma_j|\to\infty$ is said to be \emph{locally convergent} if $\rho_{|\pi|}(\pi,\sigma_j)$ converges for every pattern $\pi$.
One can take the \emph{local limit} of a locally convergent sequence of permutations to be a \emph{shift-invariant random infinite rooted permutation} ({\small SIRIRP}).
By~\cite[Proposition~2.44 and Theorem~2.45]{Borga2020} and~\cite[Proposition~3.4]{BP2020}, in an analogous manner to the theory of global limits,
every locally convergent sequence of permutations has a {\small SIRIRP} as a local limit, and every {\small SIRIRP} is the local limit of some locally convergent sequence of permutations.

% ------------------------
\section{Scale limits}\label{sectScaleSpecificLimits}

In this section, we prove some fundamental results concerning scale limits.
We begin by establishing (Theorem~\ref{thmLimScaleAny}) that scale limits can be limits at any scale.
We then show (Proposition~\ref{propEveryLength}) that, for any scale limit $\Xi$ and scaling function $f$, there exists a sequence of permutations $(\tau_\ell)_{\ell\in\bbN}$
convergent to $\Xi$ at scale~$f$
with the property that $|\tau_\ell|=\ell$ for each $\ell\in\bbN$.
Finally (Proposition~\ref{propNeedDomination}), we demonstrate that, in general, convergence at scale $f$ is not independent of convergence at scale $cf$, if $c$ is a constant.

We begin by determining the asymptotics of the number of $k$-element subsets of $[n]$ of width no greater than~$f$.
\begin{prop}\label{propNKFAsymptotics}
  If $k\ll f\ll n$, then the number of $k$-element subsets of $[n]$ of width no greater than~$f$ is asymptotic to ${nf^{k-1}}/{(k-1)!}$.
\end{prop}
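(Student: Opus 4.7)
The plan is to evaluate the given formula $\binom{n}{k}_{f} = \sum_{w=k}^{\floor{f}}(n-w+1)\binom{w-2}{k-2}$ directly, splitting it into a dominant term and a lower-order correction. Writing $n-w+1 = (n+1) - w$ yields
\[
\binom{n}{k}_{\!f} \;=\; (n+1) \sum_{w=k}^{\floor{f}} \binom{w-2}{k-2} \;-\; \sum_{w=k}^{\floor{f}} w \binom{w-2}{k-2}.
\]
Call these two sums $S_1$ and $S_2$.

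The first sum is handled by the hockey-stick identity $\sum_{i=r}^{m}\binom{i}{r} = \binom{m+1}{r+1}$. Applied with $r = k-2$ and $i = w-2$ ranging over $\{k-2,\ldots,\floor{f}-2\}$, this gives $S_1 = \binom{\floor{f}-1}{k-1}$. Since $k \ll f$, the standard asymptotic $\binom{m}{k-1} \sim m^{k-1}/(k-1)!$ for the falling factorial yields $S_1 \sim f^{k-1}/(k-1)!$. Hence $(n+1)S_1 \sim nf^{k-1}/(k-1)!$, which is the claimed leading order.

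It remains to show that $S_2$ is of strictly smaller order. Since $w \leqs \floor{f} \leqs f$ throughout the summation range, I can bound $S_2 \leqs f\+S_1$, and therefore $S_2 = O(f^k/(k-1)!)$. Comparing with the main term $(n+1)S_1 \sim nf^{k-1}/(k-1)!$, the ratio $S_2/((n+1)S_1)$ is $O(f/n)$, which tends to $0$ by the hypothesis $f \ll n$. Combining, $\binom{n}{k}_{\!f} = nf^{k-1}/(k-1)!\,\bigl(1 + o(1)\bigr)$.

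There is no real obstacle: the argument is a direct manipulation once the hockey-stick identity is invoked. The only point requiring a little care is to make the error bounds uniform — both the replacement of $n-w+1$ by $n$ (which costs a relative factor $O(f/n)$) and the approximation $\binom{\floor{f}-1}{k-1} \sim f^{k-1}/(k-1)!$ (which needs $k \ll f$ so that the factors $(f-1)(f-2)\cdots(f-k+1)$ are all asymptotic to $f$). Both are immediate under the stated hypotheses $k \ll f \ll n$.
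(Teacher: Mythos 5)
Your argument is correct, but it takes a somewhat different route from the paper's. The paper evaluates the sum exactly: it asserts the closed form $\binom{n}{k}_{f}=\frac{(f+1-k)(nk-fk+f)}{k(k-1)}\binom{f-1}{k-2}$ for integer $f$, established by induction over $f$, and then reads off the asymptotics using $k\ll f\ll n$. You avoid any exact evaluation of the full sum: writing $n-w+1=(n+1)-w$, you sum the dominant piece exactly via the hockey-stick identity, $\sum_{w=k}^{\floor{f}}\binom{w-2}{k-2}=\binom{\floor{f}-1}{k-1}$, and dispose of the correction $S_2$ with the crude bound $S_2\leq f\+S_1$, giving a relative error $O(f/n)\to 0$. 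Your route buys a shorter verification with no induction to check (and in fact your decomposition can be pushed to an exact formula too, since $\sum_{w=k}^{\floor{f}}w\binom{w-2}{k-2}=(k-1)\binom{\floor{f}}{k}+\binom{\floor{f}-1}{k-1}$, whence $\binom{n}{k}_{f}=n\binom{\floor{f}-1}{k-1}-(k-1)\binom{\floor{f}}{k}$, though the bound suffices); the paper's route buys an exact closed form at the cost of verifying that identity. One caveat you share with the paper: the step $\binom{\floor{f}-1}{k-1}\sim f^{k-1}/(k-1)!$ (like the paper's $\binom{f-1}{k-2}\sim f^{k-2}/(k-2)!$) is immediate only for fixed $k$ — which is how the proposition is always applied — whereas if $k$ were allowed to grow one would need $k^2\ll f$ rather than merely $k\ll f$; your parenthetical justification ``so that the factors are all asymptotic to $f$'' glosses over this, but it is not a gap in the regime actually used.
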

\begin{proof}
For $f\in\bbN$, %we have
\begin{align*}
  \binom{n}{k}_{\!f}
  &\;=\; \sum_{w=k}^{f}(n - w + 1)\binom{w-2}{k-2} \\
  &\;=\; \frac{(f+1-k)(nk-fk+f)}{k(k-1)}\binom{f-1}{k-2} \\
  &\;\sim\; \frac{fnk}{k(k-1)}\frac{f^{k-2}}{(k-2)!} \\
  &\;=\; \frac{nf^{k-1}}{(k-1)!},
\end{align*}
where each term in the sum is the number of $k$-element subsets of $[n]$ of width $w$, and the second identity can be established by induction over $f$.
\end{proof}

The remainder of our proofs make use of two standard operations on permutations.
Given two permutations $\sigma$ and $\tau$ with lengths $k$ and $\ell$ respectively, their \emph{direct sum}\label{defDirectSum} $\sigma\oplus\tau$ is the permutation of length $k+\ell$ consisting of $\sigma$ followed by a shifted copy of $\tau$:
\[
(\sigma\oplus\tau)(i) \;=\;
\begin{cases}
  \sigma(i)   & \text{if~} i\leqslant k , \\
  k+\tau(i-k) & \text{if~} k+1 \leqslant i\leqslant k+\ell .
\end{cases}
\]
We also use $\bigoplus^c\sigma$ to denote %$\overbrace{\sigma\oplus\ldots\oplus\sigma}^c$,
the direct sum of $c$ copies of $\sigma$.
See the left of Figure~\ref{figSubstitution} for an illustration.
Note that permutation inversion (which reflects the plot about the main diagonal) distributes over direct sum: $(\sigma\oplus\tau)^{-1}=\sigma^{-1}\oplus\tau^{-1}$.
\begin{figure}[t] %[ht]
\begin{center}
\begin{tikzpicture}[scale=0.18]
\plotpermthinbox{24}{4,2,3,1, 7,9,5,8,6, 12,14,10,13,11, 17,19,15,18,16, 22,24,20,23,21}
\end{tikzpicture}
\qquad\qquad\quad
\begin{tikzpicture}[scale=0.18]
\plotpermthinbox{24}{9,10,13,14,11,12,15,16,1,2,5,6,3,4,7,8,17,18,21,22,19,20,23,24}
\end{tikzpicture}
\caption{Plots of $4231\oplus\bigoplus^4 35142$ (left) and $213[1324][12]$ (right)}\label{figSubstitution}
\end{center}
\end{figure}
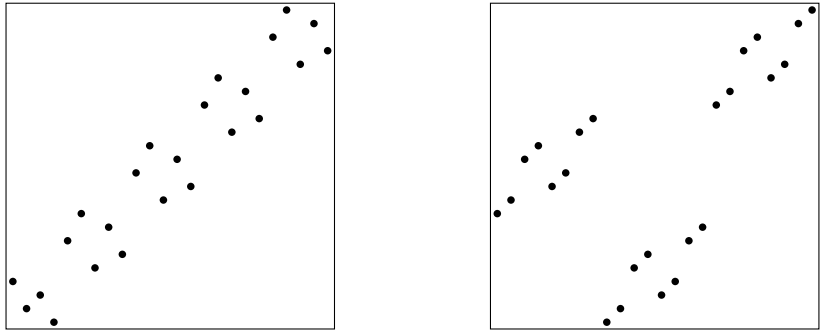

We also make use of \emph{substitution}.
Suppose $\sigma\in S_k$ and $\tau\in S_\ell$, then we denote by $\sigma[\tau]$ the permutation of length $k\+\ell$ created by replacing each point $(i,\sigma(i))$ in the plot of $\sigma$ with a small copy of $\tau$.
Note that substitution is associative: $\sigma[\tau][\upsilon]=\sigma[\tau[\upsilon]]$.
See the right of Figure~\ref{figSubstitution} for an illustration.
Note also that inversion distributes over substitution: $\sigma[\tau]^{-1}=\sigma^{-1}[\tau^{-1}]$.

Most of the subsequent proofs have a very similar structure.
We outline the argument here, so we can abbreviate the proofs below.
Suppose a sequence of permutations $(\sigma_j)_{j\in\bbN}$ converges at scale $f$ to a scale limit~$\Xi$.
From this sequence we construct another sequence $(\tau_m)_{m\in\bbN}$, each term $\tau_m$ being built from multiple copies of a specific $\sigma_{j}$ using directs sums and/or substitution, where $j=j(m)$ increases with $m$.
We desire to prove that $(\tau_m)_{m\in\bbN}$ converges to $\Xi$ at scale~$g$.
It suffices to show that, for any pattern $\pi$, we have $\liminfty[m]\rho_g(\pi,\tau_m)=\liminfty[j]\rho_f(\pi,\sigma_j)$.

Suppose $|\pi|=k$. Then $\rho_g(\pi,\tau_m) = \prob{\tau_m(K) = \pi}$, where $K$ is drawn uniformly from the family $\FFF$ consisting of \mbox{$k$-element} subsets of $[|\tau_m|]$ of width no greater than $g(|\tau_m|)$.
Let us say that $K$ is \emph{good} if each of its members is drawn from a single copy of $\sigma_{j}$ used to build $\tau_m$ and, when restricted to that copy of $\sigma_{j}$, $K$ has width at most $f(|\sigma_{j}|)$.
If $\FFF$ includes every good subset, and the probability that $K$ is \emph{not} good is bounded above by $\Delta_m$, then
\[
\left(\!1-\Delta_m\!\right)\rho_g(\pi,\tau_m)
\;\leqs\;
\rho_f(\pi,\sigma_j)
\;\leqs\;
\left(\!1-\Delta_m\!\right)\rho_g(\pi,\tau_m) + \Delta_m
.
\]
Thus, if $\Delta_m\to0$, the two asymptotic densities of $\pi$ are equal, as required.

The next four propositions establish that scale limits can be limits at any scale.
We begin with a simple technical result for later use.
\begin{prop}\label{propSublinear}
  Suppose a function $f:\bbR^+\to\bbR^+$ satisfies $f(x)\leqs x$ for all $x>0$. If $c>1$ is a constant, then $\liminfinfty[x]f(cx)/f(x)\leqs c$, and thus it is not the case that $f(cx)\gg f(x)$.
\end{prop}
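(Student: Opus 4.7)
The plan is to argue by contradiction: suppose that $\liminf_{x\to\infty} f(cx)/f(x) > c$. Then we can pick a constant $c'$ with $c < c' < \liminf_{x\to\infty} f(cx)/f(x)$, and there exists $X > 0$ such that for every $x \geq X$ we have $f(cx)/f(x) \geq c'$, i.e.\ $f(cx) \geq c' f(x)$.

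Next I would iterate this inequality along the geometric sequence $X, cX, c^2 X, \ldots$ Applying the bound $n$ times yields
\[
f(c^n X) \;\geq\; (c')^n\, f(X).
\]
Combining with the hypothesis $f(c^n X) \leq c^n X$ gives $(c')^n f(X) \leq c^n X$, so
\[
\left(\frac{c'}{c}\right)^{\!n} \;\leq\; \frac{X}{f(X)}.
\]
Since $c'/c > 1$ and the right-hand side is a fixed constant (noting $f(X) > 0$), letting $n\to\infty$ produces the desired contradiction. Hence $\liminf_{x\to\infty} f(cx)/f(x) \leq c$.

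For the final clause, recall that $f(cx) \gg f(x)$ means $\lim_{x\to\infty} f(cx)/f(x) = \infty$, which would force the liminf to be infinite, contradicting the bound by~$c$. So this follows immediately.

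There is essentially no obstacle here: the argument is just a routine geometric iteration, and the only thing to be mildly careful about is that $f(X) > 0$ (which is given, since $f$ maps into $\mathbb{R}^+$) so the ratio $X/f(X)$ is a genuine finite constant. The result cannot be sharpened without further assumptions, since $f(x)=x$ gives $f(cx)/f(x)=c$, showing the bound is tight.
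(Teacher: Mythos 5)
Your proof is correct and follows essentially the same route as the paper: assume the liminf exceeds $c$, iterate the inequality $f(cx)\geqs c'f(x)$ along the geometric sequence $X, cX, c^2X,\ldots$, and contradict $f(x)\leqs x$ once $(c'/c)^n$ exceeds $X/f(X)$. The paper's proof is just a more terse version of the same geometric-iteration argument, so there is nothing to add.
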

\begin{proof}
  Suppose not, and there exists $d>c$ and $X>0$ such that for all $x\geqs X$ we have $f(cx)/f(x)\geqs d$.
  Then, $f(c^kX)\geqs d^kf(X)$ for each $k\in\bbN$, and so $f(c^kX)>c^kX$ if $k>\log(X/f(X))/\log(d/c)$, in contradiction to the assumption that $f(x)\leqs x$.
\end{proof}
%\newpage % *******
\begin{prop}\label{propLimDown}
  Let $f$ be a scaling function and $f_\bullet$ be another scaling function dominated by~$f$.
  Suppose there exists a sequence of permutations which converges at scale $f$ to a limit~$\Xi$.
  Then there exists a sequence of permutations which converges at scale $f_\bullet$ to~$\Xi$.
\end{prop}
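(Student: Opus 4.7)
The plan is to apply the general template outlined earlier in this section. I would set $\tau_m = \bigoplus^{c_m}\sigma_{j_m}$ for integer sequences $j_m, c_m \to \infty$ to be chosen, write $s_m = |\sigma_{j_m}|$, and let $g_m = f_\bullet(|\tau_m|) = f_\bullet(c_m s_m)$. The template requires both that $\FFF$ (the $k$-subsets of $[|\tau_m|]$ of width $\leqs g_m$) contains every good subset, and that the non-good fraction tends to zero. A good subset has overall width at most $f(s_m)$, so the containment condition is exactly $g_m \geqs f(s_m)$. By Proposition~\ref{propNKFAsymptotics}, the good fraction $|\{\text{good}\}|/|\FFF|$ is asymptotic to $(f(s_m)/g_m)^{k-1}$, which tends to $1$ iff $g_m/f(s_m) \to 1$. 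The key step is therefore to choose $c_m$ so that $g_m \sim f(s_m)$.

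To this end I would take $c_m$ to be the smallest positive integer for which $f_\bullet(c_m s_m) \geqs f(s_m)$. Such $c_m$ exists because $f_\bullet(n) \to \infty$, and $c_m \to \infty$ because, for any fixed $c$, Proposition~\ref{propSublinear} combined with $f_\bullet \ll f$ gives $f_\bullet(c s_m)/f(s_m) = o(1)$. Minimality yields $f_\bullet((c_m - 1) s_m) < f(s_m)$, and since $c_m/(c_m - 1) \to 1$, one hopes to argue that $f_\bullet(c_m s_m) \leqs (1 + o(1)) f_\bullet((c_m - 1) s_m) < (1 + o(1)) f(s_m)$, giving $g_m \sim f(s_m)$ (and consequently $g_m \ll s_m$, since $f(s_m) \ll s_m$).

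Granted this scale-matching, the template delivers a sequence $\Delta_m \to 0$ with
\[
(1 - \Delta_m)\,\rho_{f_\bullet}(\pi, \tau_m) \;\leqs\; \rho_f(\pi, \sigma_{j_m}) \;\leqs\; (1 - \Delta_m)\,\rho_{f_\bullet}(\pi, \tau_m) + \Delta_m
\]
for every pattern $\pi$. Since $\rho_f(\pi, \sigma_{j_m}) \to \Xi_\pi$ by hypothesis, this forces $\rho_{f_\bullet}(\pi, \tau_m) \to \Xi_\pi$, showing that $(\tau_m)_{m \in \bbN}$ converges at scale $f_\bullet$ to $\Xi$, as required.

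The main obstacle is the upper bound $f_\bullet(c_m s_m) \leqs (1+o(1))\, f_\bullet((c_m - 1) s_m)$: Proposition~\ref{propSublinear} is stated for a fixed constant ratio, whereas the ratio $c_m/(c_m - 1)$ here is variable and tends to $1$, and a scaling function can in principle have large upward jumps. I expect the resolution is to exploit the freedom in choosing $(j_m)$, selecting a subsequence of indices along which the increment from $(c_m - 1) s_m$ to $c_m s_m$ does not coincide with an upward jump of $f_\bullet$. If needed, one can allow a short identity-padding tail in the definition of $\tau_m$, so that $|\tau_m|$ becomes a free parameter adjustable to any $n$ at which $f_\bullet(n)$ is close to $f(s_m)$, with the padding contributing only $o(1)$ to any pattern density.
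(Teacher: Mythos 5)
Your construction is exactly the paper's: form $\tau_m=\bigoplus^{c_m}\sigma_{j_m}$ with the multiplicity chosen so that $f_\bullet$ evaluated at the new length matches $f$ evaluated at the old length, then run the good-subset template with Proposition~\ref{propNKFAsymptotics}. The one place you deviate is the choice of $c_m$, and that is precisely where your argument stalls, as you acknowledge: taking the least integer with $f_\bullet(c_m s_m)\geqs f(s_m)$ gives you a lower bound on $f_\bullet(c_m s_m)$ but no upper bound, and your two proposed repairs do not supply one. Choosing the indices $j_m$ cleverly does not help, because the problematic behaviour of $f_\bullet$ is at arguments near $c_m s_m$, which move with $s_m$; and identity padding only changes the admissible lengths, while the real difficulty is that nothing guarantees $f_\bullet$ ever takes a value close to $f(s_m)$ at \emph{any} admissible integer length. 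So as written the key estimate $g_m\sim f(s_m)$ is not established.

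The paper's device for this step is the missing idea: extend $f$ and $f_\bullet$ to $\bbR^{\geqs1}$ by interpolation (so they are continuous) and set $C(u)=\max\{c>0: f_\bullet(cu)=f(u)\}$, which is well defined because $f\gg f_\bullet\gg1$; then take $c_j=\ceil{C(u_j)}$. The point is that the target value $f(u_j)$ is attained \emph{exactly} at the real argument $C(u_j)u_j$, so the only slack is the rounding of $C(u_j)$ up to an integer. One shows $C(u)\to\infty$ by the contrapositive form of your $c_m\to\infty$ argument (if $C(u)<B$ along a subsequence then $f_\bullet(Bu)>f(u)\gg f_\bullet(u)$ there, against Proposition~\ref{propSublinear}), whence $\ceil{C(u_j)}/C(u_j)\to1$ and the paper concludes $f_\bullet(\ell_j)=(1+o(1))f_\bullet\big(C(u_j)u_j\big)=(1+o(1))f(u_j)$, invoking $f_\bullet(n)\ll n$ to absorb the $(1+o(1))$ perturbation of the argument. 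Note that, once $c_m\to\infty$ is in hand, your window from $(c_m-1)s_m$ to $c_m s_m$ is also only a $(1+o(1))$ factor in the argument, so if you are willing to use the same absorption step as the paper your sandwich $f_\bullet((c_m-1)s_m)<f(s_m)\leqs f_\bullet(c_m s_m)$ closes the proof in the same way; but you should prove $c_m\to\infty$ as above rather than via the one-line ``$f_\bullet(cs_m)/f(s_m)=o(1)$'', since Proposition~\ref{propSublinear} controls only a $\liminf$, not the behaviour along an arbitrary subsequence, and the needed bound comes from the contradiction argument rather than from a direct product of ratios.
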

\begin{proof}
  Let $(\sigma_j)_{j\in\bbN}$ be a sequence of permutations which converges at scale $f$ to~$\Xi$, and suppose $|\sigma_j|=u_j$ for each $j\in\bbN$.
  Extend the domains of $f$ and $f_\bullet$ to $\bbR^{\geqs1}$ by interpolation (so both are continuous), and for every $u\geqs1$, let
  \[
  C(u) \;=\; \max\big\{c>0 \::\: f_\bullet(cu)=f(u)\big\} ,
  \]
  which is well-defined because $f\gg f_\bullet\gg1$.
  If~$f_\bullet$ is strictly increasing (and thus has an inverse), then we simply have $C(u)=f_\bullet^{-1}\big(f(u)\big)/u$.
  For example, if $f(n)=n^\alpha$ and $f_\bullet(n)=n^\beta$ with $\alpha>\beta$,
  then %$C(u)=u^{(\alpha-\beta)/\beta}$.
  $C(u)=u^{\alpha/\beta-1}$.

We claim that $C(u)\to\infty$.
Suppose, to the contrary, that there exists a constant $B>1$ such that $C(u)<B$ for arbitrarily large values of $u$.
Then, for all such $u$, we have $f_\bullet(Bu)>f(u)\gg f_\bullet(u)$.
However, $f_\bullet(u)\leqs u$, so by Proposition~\ref{propSublinear}, 
%$\liminfinfty[u] f_\bullet(Bu)/f_\bullet(u)\leqs B$, 
we do not have $f_\bullet(Bu)\gg f_\bullet(u)$,
a contradiction.

For each $j\in\bbN$, let $c_j=\ceil{C(u_j)}$ and let
$\tau_j$ be the permutation $\bigoplus^{c_j}\sigma_j$ of length $\ell_j=c_ju_j$.
We claim that the sequence $(\tau_j)_{j\in\bbN}$ converges at scale $f_\bullet$ to~$\Xi$.

Let's compare scales:
% For each $j$, we have
  \begin{align*}
    f_\bullet(\ell_j)
    &\;=\; f_\bullet\big((1+o(1))C(u_j)u_j\big) \\
    &\;=\; (1+o(1))f_\bullet(C(u_j)u_j), \qquad \text{since $f_\bullet(n)\ll n$,} \\
    &\;=\; (1+o(1))f(u_j) .
  \end{align*}
It suffices to prove that, for any pattern $\pi$, we have $\rho_{f_\bullet}(\pi,\tau_j)\sim\rho_f(\pi,\sigma_j)$.
Suppose $|\pi|=k$.
Then $\rho_{f_\bullet}(\pi,\tau_j) = \prob{\tau_j(K) = \pi}$, where $K$ is drawn uniformly from the \mbox{$k$-element} subsets of $[\ell_j]$ of width no greater than $f_\bullet(\ell_j)$.
Since $\tau_j$ consists of copies of $\sigma_j$, each of length $u_j$,  the probability that $K$ contains points from two copies of $\sigma_j$ is bounded above by $f_\bullet(\ell_j)/u_j\sim f(u_j)/u_j\ll1$.
Moreover, by Proposition~\ref{propNKFAsymptotics}, the probability that $K$ has width at most $f(u_j)$ is asymptotically % equivalent to
$\big(f_\bullet(\ell_j)/f(u_j)\big)^{k-1}$, which tends to 1 since $k$ is constant.

Thus,
$\liminfty[j]\rho_{f_\bullet}(\pi,\tau_j)=\liminfty[j]\rho_f(\pi,\sigma_j)$. %, as required.
\end{proof}
%\newpage % *******

\begin{prop}\label{propLimUp}
  Let $f$ be a scaling function and $f^\bullet$ be another scaling function that dominates~$f$.
  Suppose there exists a sequence of permutations which converges at scale $f$ to a limit~$\Xi$.
  Then there exists a sequence of permutations which converges at scale $f^\bullet$ to~$\Xi$.
\end{prop}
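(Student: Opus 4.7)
The plan is to follow the same template as Proposition~\ref{propLimDown}, but to use substitution in place of direct sum in order to \emph{increase} rather than decrease the effective scale. Let $(\sigma_j)_{j\in\bbN}$ converge at scale $f$ to $\Xi$ with $|\sigma_j|=u_j$; writing $\iota_c$ for the increasing pattern of length $c$, I would take
\[
\tau_j \;=\; \sigma_j[\iota_{c_j}],
\]
so that each point of $\sigma_j$ is replaced by an increasing block of $c_j$ consecutive points. I will choose $c_j$ so that a window of width $f^\bullet(c_j u_j)$ in $\tau_j$ spans approximately $f(u_j)$ of these blocks --- exactly what is needed for scale $f^\bullet$ on $\tau_j$ of length $\ell_j = c_j u_j$ to correspond to scale $f$ on $\sigma_j$.

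To select $c_j$, extend $f$ and $f^\bullet$ to $\bbR^{\geqs1}$ by interpolation and let
\[
C^\bullet(u) \;=\; \max\bigl\{c\geqs 1 \,:\, f^\bullet(cu)\geqs c\,f(u)\bigr\}.
\]
This is well-defined for large $u$: at $c=1$, $f^\bullet(u)\gg f(u)$, while for large $c$ the sublinearity of $f^\bullet$ forces $f^\bullet(cu)/c\to 0$, so eventually $f^\bullet(cu)<cf(u)$. Mirroring the argument in Proposition~\ref{propLimDown} using $f^\bullet\gg f$ together with Proposition~\ref{propSublinear} applied to $f^\bullet$ (passing to a non-decreasing envelope if necessary), one obtains $C^\bullet(u)\to\infty$. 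Taking $c_j=\lceil C^\bullet(u_j)\rceil$ then gives $f^\bullet(\ell_j)/c_j\sim f(u_j)$.

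For a pattern $\pi$ of length $k$, let $K$ be drawn uniformly from the $k$-subsets of $[\ell_j]$ of width at most $f^\bullet(\ell_j)$, and call $K$ \emph{good} if its members lie in $k$ distinct blocks of $\tau_j$. Choosing first a pair of points at distance less than $c_j$ (there are $O(c_j\ell_j)$ such pairs) and then $k-2$ further points in the window of width $f^\bullet(\ell_j)$ (contributing a factor $O(f^\bullet(\ell_j)^{k-2})$), the count of bad $k$-subsets is $O(c_j\ell_j f^\bullet(\ell_j)^{k-2})$, against the total $\sim \ell_j f^\bullet(\ell_j)^{k-1}/(k-1)!$ from Proposition~\ref{propNKFAsymptotics}, so
\[
\Pr[K \text{ is not good}] \;=\; O\!\bigl(c_j/f^\bullet(\ell_j)\bigr) \;=\; O\!\bigl(1/f(u_j)\bigr) \;\to\; 0.
\]
When $K$ is good, the blocks hit by $K$ form a $k$-subset $B(K)\subset[u_j]$ of width at most $f^\bullet(\ell_j)/c_j+O(1)\sim f(u_j)$, and because the $y$-coordinate ranges of distinct blocks are disjoint and ordered by $\sigma_j$, we have $\tau_j(K)=\sigma_j(B(K))$.

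The main technical obstacle is the final step: verifying that the distribution of $B(K)$ conditional on $K$ being good is asymptotically uniform on $k$-subsets of $[u_j]$ of width at most $f(u_j)$, despite $f^\bullet(\ell_j)/c_j$ being only asymptotic to (and not exactly equal to) $f(u_j)$. A direct count shows that for each block-subset $B$ with width $w_B$ safely below $\lfloor f^\bullet(\ell_j)/c_j\rfloor$, all $c_j^k$ ways of choosing one point per block extend to a valid $k$-subset of $\tau_j$ of width at most $f^\bullet(\ell_j)$; the contribution from block-subsets with $w_B$ in the boundary annulus $w_B\approx f^\bullet(\ell_j)/c_j$ is of lower order and may be absorbed by using Proposition~\ref{propNKFAsymptotics} to show $\rho_{(1+o(1))f}(\pi,\sigma_j)\sim\rho_f(\pi,\sigma_j)$. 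Once this is in hand, $\Pr[\tau_j(K)=\pi\mid K\text{ good}]\to\Xi_\pi$, and hence $\rho_{f^\bullet}(\pi,\tau_j)\to\Xi_\pi$, as required.
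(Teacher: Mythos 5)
Your proposal is essentially the paper's own proof: the paper likewise builds $\tau_j=\sigma_j[\varphi_j]$ (with an arbitrary block $\varphi_j$ in place of your $\iota_{c_j}$, which is immaterial) with block length $d_j=\lceil D(u_j)\rceil$, where $D(u)=\max\{d>0 : f^\bullet(du)=d\,f(u)\}$ is the same quantity as your $C^\bullet(u)$ up to replacing the inequality by an equation, proves $D(u)\to\infty$ by the same Proposition~\ref{propSublinear}-style argument you invoke, and then bounds the probability of two points landing in one block by $\binom{k}{2}d_j/f^\bullet(\ell_j)\sim\binom{k}{2}/f(u_j)\to0$ and absorbs the width mismatch between $f^\bullet(\ell_j)$ and $d_j f(u_j)$ via Proposition~\ref{propNKFAsymptotics}, exactly as you do. Your discussion of the boundary annulus and of conditional uniformity is, if anything, a little more explicit than the paper's.
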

\begin{proof}
  Let $(\sigma_j)_{j\in\bbN}$ be a sequence of permutations which converges at scale $f$ to~$\Xi$, and suppose $|\sigma_j|=u_j$ for each $j\in\bbN$.
  Extend the domains of $f$ and $f^\bullet$ to $\bbR^{\geqs1}$ by interpolation (so both are continuous), and
  for every $u\geqs1$, let
  \[
  D(u) \;=\; \max\big\{d>0 \::\: f^\bullet(du)=d f(u)\big\} .
  \]
  If~$g(n)=f^\bullet(n)/n$ is strictly decreasing, then we simply have $D(u)=g^{-1}\big(f(u)/u\big)/u$.
  For example, if $f(n)=n^\alpha$ and $f^\bullet(n)=n^\beta$ with $\alpha<\beta$,
  then $D(u)=u^{(\beta-\alpha)/(1-\beta)}$.

We claim that $D(u)\to\infty$.
Suppose, to the contrary, that there exists a constant $B>1$ such that $D(u)<B$ for infinitely many values of $u$.
Then, for all such $u$, we have $f(Bu)\gg f^\bullet(Bu)>B\+f(u)$.
But this is impossible, because $f(u)\leqs u$, so by Proposition~\ref{propSublinear}, 
we do not have $f(Bu)\gg f(u)$.

  For each $j\in\bbN$, let $d_j=\ceil{D(u_j)}$ and let
  $\tau_j$ be the permutation $\sigma_j[\vphi_j]$, where $\vphi_j$ is an arbitrary permutation of length $d_j$. Note that $\tau_j$ has length $\ell_j=d_ju_j$.
  We claim that %the sequence
  $(\tau_j)_{j\in\bbN}$ converges at scale $f^\bullet$ to~$\Xi$.

  Let's compare scales:
 %For each $j$, we have
  \begin{align*}
    f^\bullet(\ell_j)
    &\;=\; f^\bullet\big((1+o(1))D(u_j)u_j\big) \\
    &\;=\; (1+o(1))f^\bullet(D(u_j)u_j), \qquad \text{since $f^\bullet(n)\ll n$,} \\
    &\;=\; (1+o(1))D(u_j)f(u_j) \\
    &\;=\; (1+o(1))d_j\+f(u_j) .
  \end{align*}
  It suffices to prove that, for any pattern $\pi$, we have $\rho_{f^\bullet}(\pi,\tau_j)\sim\rho_f(\pi,\sigma_j)$.
Suppose $|\pi|=k$.
Then $\rho_{f^\bullet}(\pi,\tau_j) = \prob{\tau_j(K) = \pi}$, where $K$ is drawn uniformly from the \mbox{$k$-element} subsets of $[\ell_j]$ of width no greater than $f^\bullet(\ell_j)$.
Since $\tau_j$ consists of copies of $\vphi_j$, each of length $d_j$, the probability that $K$ contains two or more points from the same copy of $\vphi_j$ is bounded above by $\binom{k}{2}{d_j}/{f^\bullet(\ell_j)}\sim \binom{k}{2}/f(u_j)\ll1$,
since the probability of any two points being from the same block is less than $1/f(u_j)$.
Moreover, by Proposition~\ref{propNKFAsymptotics}, the probability that $K$ has width at most $d_j\+f(u_j)$ is asymptotically %equivalent to
$\big(f^\bullet(\ell_j)/d_j\+f(u_j)\big)^{k-1}$, which tends to 1 since $k$ is constant.

Thus,
$\liminfty[j]\rho_{f^\bullet}(\pi,\tau_j)=\liminfty[j]\rho_f(\pi,\sigma_j)$. %, as required.
\end{proof}

Our first theorem follows by combining these last two propositions.
\begin{thm}\label{thmLimScaleAny}
  Let $\Xi$ be any scale limit and $f$ be any scaling function.
  Then there exists a sequence of permutations which converges at scale $f$ to~$\Xi$.
\end{thm}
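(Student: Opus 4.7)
The plan is to chain Propositions~\ref{propLimDown} and~\ref{propLimUp}, routing through an auxiliary scale that is strictly dominated by both $f$ and the scale at which $\Xi$ is already known to arise. Since $\Xi$ is a scale limit, by definition there is some scaling function $f_0$ and a sequence of permutations converging at scale $f_0$ to~$\Xi$. The target scale $f$ is arbitrary, so in general $f$ and $f_0$ need not be comparable (neither dominates the other), which prevents a single application of either preceding proposition. The remedy is to pass through a common lower scale.

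First I would construct a scaling function $g$ satisfying $g\ll f_0$ and $g\ll f$. Setting $h(n)=\min\{f(n),f_0(n)\}$ gives a function with $1\ll h(n)\ll n$ and $h(n)\leqs n$, since these properties are inherited termwise from $f$ and $f_0$. Then taking $g(n)=\ceil{\log h(n)}$ (or any similarly slow function of $h$) produces a scaling function that is strictly dominated by $h$, hence by both $f_0$ and $f$.

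Next, I would apply Proposition~\ref{propLimDown} with the given sequence at scale $f_0$ and the dominated scaling function $g$, obtaining a sequence of permutations converging at scale $g$ to~$\Xi$. Finally, I would apply Proposition~\ref{propLimUp} to this new sequence, taking $g$ as the base scale and $f$ as the dominating scale (valid since $g\ll f$), to obtain a sequence converging at scale $f$ to~$\Xi$, as required.

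The only delicate step is the construction of $g$, and even there nothing substantive is going on beyond pushing $h=\min(f,f_0)$ slightly below both $f$ and $f_0$ while keeping $g\to\infty$; once $g$ is in hand, the theorem is an immediate corollary of the two preceding propositions. There is no real obstacle, but it is worth noting that without the intermediate scale one cannot argue directly: the direct-sum construction of Proposition~\ref{propLimDown} only moves to strictly smaller scales, and the substitution construction of Proposition~\ref{propLimUp} only moves to strictly larger ones.
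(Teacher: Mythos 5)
Your proposal is correct and follows essentially the same route as the paper: the paper also passes through an intermediate scaling function dominated by both scales (it uses $\sqrt{\min(g(n),f(n))}$, with a direct shortcut via Proposition~\ref{propLimDown} or~\ref{propLimUp} when the two scales happen to be comparable), then applies Proposition~\ref{propLimDown} followed by Proposition~\ref{propLimUp}. Your $\ceil{\log h(n)}$ works just as well (modulo adjusting finitely many small-$n$ values so it stays positive), so the only difference is cosmetic.
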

\begin{proof}
  Since $\Xi$ is a scale limit, there is some scaling function $g$ for which there exists a sequence of permutations which converges to $\Xi$ at scale~$g$.
  If either $f\ll g$ or $f\gg g$, the result then follows directly from Proposition~\ref{propLimDown} or Proposition~\ref{propLimUp}, respectively.

  Otherwise, let $h$ be the scaling function defined by $h(n)=\sqrt{\min(g(n),f(n))}$, so both $h\ll g$ and $h\ll f$. Proposition~\ref{propLimDown} can then be applied to give a sequence of permutations which converges at scale $h$ to~$\Xi$. A subsequent application of Proposition~\ref{propLimUp} then yields a sequence of permutations which converges to $\Xi$ at scale~$f$. %, as required.
\end{proof}

Our next proposition allows us to assume the existence of a sequence convergent at a specific scale that consists of permutations of every length.
\begin{prop}\label{propEveryLength}
  Suppose $\Xi$ is a scale limit and $f$ a scaling function.
  Then there exists a sequence of permutations $(\tau_\ell)_{\ell\in\bbN}$
  convergent to $\Xi$ at scale~$f$
  with the property that $|\tau_\ell|=\ell$ for each $\ell\in\bbN$.
\end{prop}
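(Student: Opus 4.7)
The plan is to apply Theorem~\ref{thmLimScaleAny} to obtain a sequence $(\sigma_j)_{j\in\bbN}$ with $|\sigma_j|=u_j$ converging at scale $f$ to $\Xi$, and then to ``fill in'' a permutation of every length by combining the constructions from Propositions~\ref{propLimDown} and~\ref{propLimUp}. The key point is that direct sum alone cannot adjust the effective scale inside a block to match $f(\ell)$, while substitution alone cannot reach every length---so both must be used in tandem.

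For each sufficiently large $\ell$, I would pick $j(\ell)$ so that $u_{j(\ell)}\to\infty$ but slowly enough that both $f(u_{j(\ell)})=o(f(\ell))$ and $u_{j(\ell)}=o(\ell)$---for instance, by letting $j(\ell)$ be the largest $j$ with $u_j$ below some slowly growing function of $\ell$ tailored to the rate of $f$. Setting $d_\ell=\lceil f(\ell)/f(u_{j(\ell)})\rceil$, $D_\ell=d_\ell u_{j(\ell)}$, $c_\ell=\lfloor\ell/D_\ell\rfloor$, and $r_\ell=\ell-c_\ell D_\ell$, and taking any permutations $\vphi_\ell$ and $\omega_\ell$ of lengths $d_\ell$ and $r_\ell$ respectively, I would define
\[
\tau_\ell \;=\; \Big(\bigoplus^{c_\ell}\sigma_{j(\ell)}[\vphi_\ell]\Big) \oplus \omega_\ell,
\]
which has length $c_\ell d_\ell u_{j(\ell)}+r_\ell=\ell$. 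The inner substitution, echoing Proposition~\ref{propLimUp}, stretches the effective scale of $\sigma_{j(\ell)}$ from $f(u_{j(\ell)})$ up to $d_\ell f(u_{j(\ell)})\sim f(\ell)$, and the outer direct sum, echoing Proposition~\ref{propLimDown}, preserves that scale while letting $\ell$ range over all of $\bbN$.

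To verify convergence at scale $f$, I would follow the abbreviated argument outlined before Proposition~\ref{propLimDown}. Fix a pattern $\pi$ of length $k$, and let $K$ be drawn uniformly from the width-$\leqs f(\ell)$ $k$-subsets of $[\ell]$. Call $K$ \emph{good} if it lies inside a single copy of $\sigma_{j(\ell)}[\vphi_\ell]$ within $\tau_\ell$, and its $k$ points occupy $k$ distinct copies of $\vphi_\ell$ within that block. Combining the boundary-, padding-, and inner-collision bounds from the proofs of Propositions~\ref{propLimDown} and~\ref{propLimUp}, the probability that $K$ is bad is $O\!\big(f(u_{j(\ell)})/u_{j(\ell)}+1/c_\ell+1/f(u_{j(\ell)})\big)$, which vanishes by the choice of $j(\ell)$. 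Conditional on being good, the outer indices of $K$ in the corresponding copy of $\sigma_{j(\ell)}$ form a nearly uniformly distributed width-$\leqs f(u_{j(\ell)})$ $k$-subset of $[u_{j(\ell)}]$, so the conditional density of $\pi$ in $\tau_\ell$ equals $(1+o(1))\,\rho_f(\pi,\sigma_{j(\ell)})\to\Xi_\pi$.

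The main obstacle will be this near-uniformity claim: the width constraint on $K$ in $[\ell]$ slightly underweights those outer $k$-subsets whose outer width is close to $f(u_{j(\ell)})$, since the number of valid choices of inner coordinates decreases once the outer span exhausts the allowed overall window. A careful count via Proposition~\ref{propNKFAsymptotics}, in the spirit of the analyses in the proofs of Propositions~\ref{propLimDown} and~\ref{propLimUp}, shows that this boundary contribution is of lower order, so the conditional density agrees with $\rho_f(\pi,\sigma_{j(\ell)})$ up to a vanishing error.
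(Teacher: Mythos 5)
Your construction is essentially the paper's: it too builds $\tau_\ell$ as a direct sum of $c_\ell$ copies of $\sigma_{j(\ell)}[\vphi_\ell]$ together with a padding block making up the length deficit, with $d_\ell$ of order $f(\ell)/f(u_{j(\ell)})$, and it verifies convergence by exactly the bad events you list (window meeting two outer copies, two points in one inner copy, window meeting the padding), resolving your ``near-uniformity'' worry via Proposition~\ref{propNKFAsymptotics} and the asymptotic identity $f(\ell)\sim d_\ell\, f(u_{j(\ell)})$. The genuine gap is in your choice of $j(\ell)$: the two conditions you impose, $f(u_{j(\ell)})=o(f(\ell))$ and $u_{j(\ell)}=o(\ell)$, do not guarantee that $c_\ell\to\infty$, hence do not guarantee that your error term $1/c_\ell$ vanishes, nor that the padding length $r_\ell$ (which can be nearly $d_\ell u_{j(\ell)}>\ell/(c_\ell+1)$) is $o(\ell)$. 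Concretely, take $f(n)=n/\log n$ and suppose the available lengths allow $u_{j(\ell)}$ of order $\sqrt{\ell}$; both of your conditions hold, yet $d_\ell\approx f(\ell)/f(\sqrt{\ell})\approx\tfrac12\sqrt{\ell}$, so $d_\ell u_{j(\ell)}\approx\ell/2$, $c_\ell\approx 2$, and the padding $\omega_\ell$ can occupy a constant fraction of $[\ell]$; the sampled window then meets $\omega_\ell$ with probability bounded away from zero and the argument breaks down.

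What is actually needed is the stronger requirement $u_{j(\ell)}/f(u_{j(\ell)})=o\big(\ell/f(\ell)\big)$, i.e.\ that both $D(\ell)=f(\ell)/f(u_{j(\ell)})$ and $C(\ell)=\frac{\ell}{f(\ell)}\cdot\frac{f(u_{j(\ell)})}{u_{j(\ell)}}$ tend to infinity. This is precisely where the paper does its explicit work: it extends $f$ to the reals by interpolation, introduces a strictly increasing minorant $f_0\leq f$, and defines $j(\ell)=\max\{j: L(u_j)\leq\ell\}$ for an auxiliary function $L$ chosen to grow fast enough (e.g.\ $L(u)\geq f_0^{-1}(u^2)$, and analogously for $g(\ell)=\ell/f(\ell)$) so that $D(\ell)\geq\sqrt{f(\ell)}$ and $C(\ell)\geq\sqrt{g(\ell)}$. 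Your phrase ``tailored to the rate of $f$'' gestures at this and the repair is easy, since $u_{j(\ell)}$ can always be made to grow as slowly as one likes; but as written the stated conditions are insufficient, and the assertion that $1/c_\ell$ ``vanishes by the choice of $j(\ell)$'' is unjustified.
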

\begin{proof}
  Suppose $(\sigma_j)_{j\in\bbN}$ is a sequence of permutations which converges at scale $f$ to~$\Xi$, and that
  $|\sigma_j|=u_j$ for each $j\in\bbN$.
  Such a sequence exists by Theorem~\ref{thmLimScaleAny}.
  By taking a subsequence, we may assume that $u_j$ is strictly increasing.
  We also extend the domain of $f$ to $\bbR^{\geqs1}$ by interpolation.

Let $L:\bbR^+\to\bbR^+$ be a continuous, positive, strictly increasing function. %, to be specified later.

  For every $\ell\geqs\ell_1= L(u_1)$, let $j(\ell)=\max\{j\in\bbN:L(u_j)\leqs \ell\}$ select an index into the sequence~$(\sigma_j)_{j\in\bbN}$, and let $u(\ell)=u_{j(\ell)}$ be the length of the corresponding permutation.
  Note that $u(\ell)$ is a weakly increasing integer-valued step function, and that $L(u(\ell))\leqs \ell$, so we have $u(\ell)\leqs L^{-1}(\ell)$.

  For every $\ell\geqs\ell_1$, let $D(\ell)={f(\ell)}/{f(u(\ell))}$.
  We claim that if $L$ grows sufficiently fast, then $D(\ell)\to\infty$.
  Let $f_0:\bbR^+\to\bbR^+$ be a continuous positive strictly increasing function such that $f_0(\ell)\leqs f(\ell)$ for all $\ell$, and suppose
  $L(u)\geqs f_0^{-1}(u^2)$.
  Then,
  \[
  \textstyle
  f(u(\ell)) \;\leqs\; u(\ell) \;\leqs\; L^{-1}(\ell) \;\leqs\; \sqrt{f_0(\ell)} \;\leqs\; \sqrt{f(\ell)}
  ,
  \]
  so $D(\ell)\geqs\sqrt{f(\ell)}\gg1$.

Now, for every $\ell\geqs\ell_1$, let $C(\ell)=\dfrac{\ell}{f(\ell)}\dfrac{f(u(\ell))}{u(\ell)}$,
  so that $C(\ell)\+D(\ell)\+u(\ell)=\ell$.

  If $L$ grows fast enough, then $C(\ell)\to\infty$: Let $g(\ell)=\ell/f(\ell)$. Then $C(\ell)={g(\ell)}/{g(u(\ell))}$.
  So by the argument used to show that $D$ diverges, we can choose $L$ so that $C(\ell)\geqs\sqrt{g(\ell)}$, which tends to infinity because $f(\ell)\ll\ell$.

  For each positive integer $\ell<\ell_1$, let $\tau_\ell$ be any permutation of length $\ell$.
  For each integer $\ell\geqs\ell_1$, let $d_\ell=\floor{D(\ell)}$ and $c_\ell=\floor{C(\ell)}$, and let $\tau_\ell$ be the permutation
  \[
  \psi_\ell \:\oplus\: \bigoplus\nolimits^{c_\ell}\sigma_{j(\ell)}[\vphi_\ell],
  \]
  where
  $\psi_\ell$ is any permutation of length $\ell-c_\ell\+d_\ell\+u(\ell)$, which may be zero,
  %$\sigma_{j(\ell)}$ has length $u(\ell)$,
  and $\vphi_\ell$ is an arbitrary permutation of length $d_\ell$.
  Note that $\tau_\ell$ has length $\ell$, as required,
  since $|\sigma_{j(\ell)}|=u(\ell)$.
  Note also that %we have
  $|\psi_\ell| < \big((c_\ell+1)(d_\ell+1) - c_\ell\+d_\ell \big)u(\ell) = (c_\ell+d_\ell+1)\+u(\ell) \ll \ell$.
  We claim that %the sequence
  $(\tau_\ell)_{\ell\in\bbN}$ converges at scale $f$ to~$\Xi$.

  Let's compare scales:
 %For each $\ell$, we have
  $
    f(\ell)
    = D(\ell)\+f(u(\ell))
    = (1+o(1))d_\ell\+f(u(\ell))
  $.

It suffices to prove that, for any pattern $\pi$, we have $\rho_{f}(\pi,\tau_\ell)\sim\rho_f(\pi,\sigma_{j(\ell)})$.
Suppose $|\pi|=k$.
Then $\rho_{f}(\pi,\tau_\ell) = \prob{\tau_\ell(K) = \pi}$, where $K$ is drawn uniformly from the \mbox{$k$-element} subsets of $[\ell]$ of width no greater than $f(\ell)$.
Now the probability that $K$ contains a point from $\psi_\ell$ is bounded above by $|\psi_\ell|/(\ell-f(\ell))\ll1$.
Also, since $\tau_\ell$ consists of copies of $\sigma_{j(\ell)}[\vphi_\ell]$, each of length $d_\ell\+u(\ell)$, the probability that $K$ contains points from two copies of $\sigma_{j(\ell)}[\vphi_\ell]$ is bounded above by $f(\ell)/d_\ell\+u(\ell)\sim f(u(\ell))/u(\ell)\ll1$.
Moreover, since $\tau_\ell$ consists of copies of $\vphi_\ell$, each of length $d_\ell$, the probability that $K$ contains two or more points from the same copy of $\vphi_\ell$ is bounded above by $\binom{k}{2}{d_\ell}/{f(\ell)}\sim \binom{k}{2}/f(u(\ell))\ll1$, since $k$ is constant.
Furthermore, by Proposition~\ref{propNKFAsymptotics}, the probability that $K$ has width at most $d_\ell\+f(u(\ell))$ is asymptotically %equivalent to
$\big(f(\ell)/d_\ell\+f(u(\ell))\big)^{k-1}$, which tends to 1 since $k$ is constant.

Thus,
$\liminfty[\ell]\rho_{f}(\pi,\tau_\ell)=\liminfty[j]\rho_f(\pi,\sigma_j)$.%, as required.
\end{proof}

In a similar manner, we may also assume that the existence of a locally convergent sequence convergent that consists of permutations of every length.
We provide a proof, since this doesn't seem to be in the literature.
\begin{prop}\label{propEveryLengthLocal}
  Suppose $\Lambda$ is a permutation local limit.
  Then there exists a sequence of permutations $(\tau_\ell)_{\ell\in\bbN}$
  that converges locally to $\Lambda$
  with the property that $|\tau_\ell|=\ell$ for each $\ell\in\bbN$.
\end{prop}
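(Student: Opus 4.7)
The plan is to mirror the construction used in the proof of Proposition~\ref{propEveryLength}, but without any scaling-function gymnastics: in local convergence the ``scale'' is the constant $k=|\pi|$, so there is no need to match rates of growth and the whole argument collapses to a single direct-sum padding.

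By the results of Borga and of Borga--Penaguiao recalled immediately before the statement, since $\Lambda$ is a local limit there is some locally convergent sequence $(\sigma_j)_{j\in\bbN}$ of permutations with $\rho_{|\pi|}(\pi,\sigma_j)\to\Lambda_\pi$ for every pattern~$\pi$. Writing $u_j=|\sigma_j|$ and passing to a subsequence, I may assume $u_j$ is strictly increasing. I would then select an index function $j(\ell)$ for which $u(\ell):=u_{j(\ell)}$ satisfies both $u(\ell)\to\infty$ and $u(\ell)\ll\ell$; the simplest choice is $j(\ell)=\max\{j:u_j^2\leqs\ell\}$, which ensures $u(\ell)\leqs\sqrt{\ell}$. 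Set $c_\ell=\floor{\ell/u(\ell)}$, so that $c_\ell\to\infty$, and define
\[
\tau_\ell \;=\; \left(\bigoplus\nolimits^{c_\ell}\sigma_{j(\ell)}\right) \oplus \psi_\ell ,
\]
where $\psi_\ell$ is any permutation of length $\ell-c_\ell u(\ell)<u(\ell)$, possibly the empty permutation; for the finitely many small $\ell$ not covered, let $\tau_\ell$ be any permutation of length~$\ell$. By construction $|\tau_\ell|=\ell$ for every $\ell\in\bbN$.

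To verify local convergence, fix a pattern $\pi$ of length $k$ and count the consecutive $k$-windows of $\tau_\ell$. Each such window either lies wholly inside one of the $c_\ell$ copies of $\sigma_{j(\ell)}$, lies wholly inside $\psi_\ell$, or straddles a boundary between consecutive blocks. The first kind contributes exactly $c_\ell\+\nu_k(\pi,\sigma_{j(\ell)}) = c_\ell\+(u(\ell)-k+1)\+\rho_k(\pi,\sigma_{j(\ell)})$ occurrences of $\pi$, while the latter two together contribute at most $u(\ell)+c_\ell(k-1)=O(u(\ell)+c_\ell)$, which is $o(\ell)$ because $c_\ell u(\ell)\sim\ell$ and $u(\ell)\ll\ell$. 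Dividing by $\ell-k+1\sim\ell$ and using $c_\ell(u(\ell)-k+1)/(\ell-k+1)\to 1$ yields
\[
\rho_k(\pi,\tau_\ell) \;=\; \big(1+o(1)\big)\+\rho_k(\pi,\sigma_{j(\ell)}) \,+\, o(1) \;\longrightarrow\; \Lambda_\pi ,
\]
as required.

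The only real technical point is the joint requirement that $u(\ell)\to\infty$ (so that $\rho_k(\pi,\sigma_{j(\ell)})\to\Lambda_\pi$) and $u(\ell)\ll\ell$ (so that $c_\ell\to\infty$ and the boundary and padding contributions are negligible); any $j(\ell)$ with $u_{j(\ell)}^2\leqs\ell<u_{j(\ell)+1}^2$ does the job. I do not anticipate any serious obstacle, as this argument is strictly simpler than that of Proposition~\ref{propEveryLength}: there is no scaling function to track, and no substitution step is needed.
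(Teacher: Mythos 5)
Your proposal is correct and is essentially the paper's own proof: the same index choice $j(\ell)=\max\{j: u_j^2\leqs\ell\}$, the same padded direct sum $\bigoplus^{c_\ell}\sigma_{j(\ell)}$ with a leftover block of length less than $u(\ell)$, and the same error bounds. The only difference is cosmetic: you count consecutive $k$-windows directly, whereas the paper phrases the same estimate as a probability over a uniformly random width-$k$ window.
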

\begin{proof}
  Suppose $(\sigma_j)_{j\in\bbN}$ is a sequence of permutations which converges locally to~$\Lambda$, and that
  $|\sigma_j|=u_j$ for each $j\in\bbN$. By taking a subsequence, we may assume that $u_j$ is strictly increasing.

For every $\ell\geqs\ell_1= {u_1}^{\!2}$, let $j(\ell)=\max\{j\in\bbN:{u_j}^{\!2}\leqs \ell\}$ select an index into the sequence~$(\sigma_j)_{j\in\bbN}$, and let $u(\ell)=u_{j(\ell)}$ be the length of the corresponding permutation.
Note that %$u(\ell)$ is a weakly increasing integer-valued step function, and that
$u(\ell)^2\leqs \ell$, so $u(\ell)\leqs\sqrt{\ell}$.
Also, for every $\ell\geqs\ell_1$, let $C(\ell)=\ell/u(\ell)$, so that $C(\ell)\+u(\ell)=\ell$.
Note that $C(\ell)\geqs\sqrt{\ell}$, so $C(\ell)\to\infty$.

  For each positive integer $\ell<\ell_1$, let $\tau_\ell$ be any permutation of length $\ell$.
  For each integer $\ell\geqs\ell_1$, let $c_\ell=\floor{C(\ell)}$, and let $\tau_\ell$ be the permutation
  \[
  \psi_\ell \:\oplus\: \bigoplus\nolimits^{c_\ell}\sigma_{j(\ell)},
  \]
  where
  $\psi_\ell$ is any permutation of length $\ell-c_\ell\+u(\ell)$, which may be zero.
  Note that $\tau_\ell$ has length~$\ell$, as required. %, since $|\sigma_{j(\ell)}|=u(\ell)$.
  Note also that $|\psi_\ell| < u(\ell) \ll \ell$.
  We claim that %the sequence
  $(\tau_\ell)_{\ell\in\bbN}$ converges locally to~$\Lambda$.

It suffices to prove that, for any pattern $\pi$, we have $\rho_{|\pi|}(\pi,\tau_\ell)\sim\rho_{|\pi|}(\pi,\sigma_{j(\ell)})$.
  Suppose $|\pi|=k$.
  Then $\rho_k(\pi,\tau_\ell) = \prob{\tau_\ell(K) = \pi}$, where $K$ is drawn uniformly from the subintervals of $[\ell]$ of width $k$.
Now the probability that $K$ contains a point from $\psi_\ell$ is bounded above by $|\psi_\ell|/(\ell-k)\ll1$.
Also, since $\tau_\ell$ consists of copies of $\sigma_{j(\ell)}$, each of length $u(\ell)$, the probability that $K$ contains points from two copies of $\sigma_{j(\ell)}$ is bounded above by $k/u(\ell)\ll1$.

  Thus,
  $\liminfty[\ell]\rho_{k}(\pi,\tau_\ell)=\liminfty[j]\rho_k(\pi,\sigma_j)$. %, as required.
\end{proof}

In the next section, we establish our primary result, showing asymptotic independence at a countably infinite number of scales.
As a condition for this, we require that the set of scales is \emph{totally ordered by domination}, that is, for every distinct pair of scaling functions $f$ and $g$, either $f\ll g$ or $g\ll f$.\footnote{For a delightful elementary exposition of the properties of the poset of real-valued functions under ordering by domination, see Hardy's \emph{Orders of infinity}~\cite{Hardy1910}.}
We conclude this section by proving a result that shows that we need this condition, or one like it: limits at scales whose ratio is a constant are not in general independent.
\begin{prop}\label{propNeedDomination}
If a sequence of permutations converges at scale $f$ to the increasing permuton $\!\gcone{1}{1}\!$,
then for any constant $c\in(0,1)$, at scale $cf$ the sequence also converges to~$\!\gcone{1}{1}\!$.
\end{prop}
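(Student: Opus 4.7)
The plan is to reduce everything to controlling the density of the identity pattern $\iota_k = 12\cdots k$ at scale $cf$. Since the increasing permuton $\!\gcone{1}{1}\!$ satisfies $\rho(\iota_k,\!\gcone{1}{1}\!)=1$ and $\rho(\pi,\!\gcone{1}{1}\!)=0$ for every non-identity $\pi\in S_k$, and since the densities of patterns of fixed length~$k$ always sum to one, it suffices to prove $\rho_{cf}(\iota_k,\sigma_j)\to 1$ for every $k$.

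Fix $k\geqs1$ and write $n=|\sigma_j|$. Abbreviate $N_g=\binom{n}{k}_{\!g}$ for the number of $k$-element subsets of $[n]$ of width at most~$g$, and let $B_g$ be the subfamily of these on which $\sigma_j$ does \emph{not} induce $\iota_k$. The first observation is the trivial monotonicity: because $c\in(0,1)$ gives $cf\leqs f$, every $k$-subset of width at most $cf$ also has width at most~$f$, so $B_{cf}\subseteq B_f$ and in particular $|B_{cf}|\leqs|B_f|$.

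The quantitative input is Proposition~\ref{propNKFAsymptotics}, applied to both scales (both are valid scaling functions since $c<1$ ensures $cf\leqs f\leqs n$). It yields $N_f\sim nf^{k-1}/(k-1)!$ and $N_{cf}\sim n(cf)^{k-1}/(k-1)!$, so
\[
\frac{N_f}{N_{cf}} \;\longrightarrow\; c^{-(k-1)},
\]
which is a finite positive constant. Combined with the hypothesis $|B_f|/N_f=1-\rho_f(\iota_k,\sigma_j)\to 0$, this gives
\[
1-\rho_{cf}(\iota_k,\sigma_j) \;=\; \frac{|B_{cf}|}{N_{cf}} \;\leqs\; \frac{|B_f|}{N_f}\cdot\frac{N_f}{N_{cf}} \;\longrightarrow\; 0\cdot c^{-(k-1)} \;=\; 0,
\]
as required.

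There is no genuine obstacle: the whole argument rests on the monotonicity $B_{cf}\subseteq B_f$ and on the fact that the two normalising counts $N_f,N_{cf}$ differ only by a bounded multiplicative factor, so a vanishing \emph{absolute} count of bad subsets at scale~$f$ remains vanishing \emph{relative} to the smaller count at scale~$cf$. Note that this trick specifically exploits that the target limit is concentrated on a single pattern at each length; for a general scale limit the inclusion $B_{cf}\subseteq B_f$ would not by itself force the densities to agree, which is exactly why the proposition is framed as showing that limits at scales $f$ and $cf$ cannot be chosen \emph{independently}.
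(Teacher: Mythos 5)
Your argument is correct and rests on exactly the same two ingredients as the paper's proof: the inclusion of width-at-most-$cf$ occurrences among width-at-most-$f$ occurrences, and Proposition~\ref{propNKFAsymptotics} giving $\binom{n}{k}_{f}\big/\binom{n}{k}_{cf}\to c^{-(k-1)}$. The only difference is packaging — you bound the aggregate non-identity density directly, while the paper fixes one non-increasing pattern and derives a contradiction via a limsup/subsequence argument — so this is essentially the paper's proof in a slightly more streamlined form.
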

\begin{proof}
  Suppose $(\sigma_j)_{j\in\bbN}$ converges at scale $f$ to the increasing permuton; that is,
  \[
  \liminfty[j]\rho_f(\pi,\sigma_j)
  \;=\;
  \begin{cases}
    1, & \text{if $\pi$ is an increasing permutation,} \\
    0, & \text{otherwise.}
  \end{cases}
  \]
Suppose further, and contrary to the claim, that for some positive constant $c<1$, some non-increasing $\pi\in S_k$ and some $\rho>0$, we have
  $
  \limsup\rho_{cf}(\pi,\sigma_j) = \rho
  $.
  Then $(\sigma_j)_{j\in\bbN}$ has a subsequence, $(\tau_j)_{j\in\bbN}$ say, such that $\rho_{cf}(\pi,\tau_j)\to \rho$.
  Suppose that $|\tau_j|=n_j$ for each $j\in\bbN$, and let $\iota_k=1\ldots k$ denote the increasing permutation of length $k$.

  Given that $cf<f$, %we have
  \[
  \nu_f(\iota_k,\tau_j) \:+\: \nu_{cf}(\pi,\tau_j) \;\leqs\; \binom{n_j}{k}_{\!f},
  \]
  since the left-hand side is the number of occurrences of certain length-$k$ patterns of width no greater than $f$ and the right-hand side is the total number of
  occurrences of \emph{all} length-$k$ patterns of width at most $f$.

  However, $\nu_f(\iota_k,\tau_j) \sim \binom{n_j}{k}_f$ since $(\tau_j)_{j\in\bbN}$ converges to~$\!\gcone{1}{1}\!$ at scale~$f$.
  Moreover, by Proposition~\ref{propNKFAsymptotics},
  \[
  \nu_{cf}(\pi,\tau_j)
  \;\sim\;
  \rho \frac{n_j\+(cf)^{k-1}}{(k-1)!}
  \;=\;
  \rho \+ c^{k-1} \frac{n_j\+f^{k-1}}{(k-1)!}
  \;\sim\;
  \rho \+ c^{k-1} \binom{n_j}{k}_{\!f}
  ,
  \]
  and so $\nu_f(\iota_k,\tau_j) + \nu_{cf}(\pi,\tau_j) \sim (1+\rho\+c^{k-1}) \binom{n_j}{k}_f$, which is a contradiction because $\rho\+c^{k-1}$ is a positive constant.
\end{proof}

% --------------------------------
\section{Independence of limits}\label{sectIndependence}

In this final section, we first prove
independence of limits at countably infinite scales (Theorem~\ref{thmIndependence}).
Then, after briefly presenting two examples, we extend this result to independence of limits in two directions (Theorem~\ref{thm2DIndependence}).

% ------------------------
\subsection{Independence of scale limits}

\begin{thm}\label{thmIndependence}
Let $\{f_t : t\in\bbN \}$ be any countably infinite set of scaling functions totally ordered by domination. %, with each $f_t\ll f_0=n$.
For each $t\in\bbN$, let $\Xi_t$ be any scale limit.
Let $\Gamma$ be any permuton and $\Lambda$ be any permutation local limit.
Then there exists a sequence of permutations which
converges to~$\Xi_t$ at scale $f_t$ for each $t\in\bbN$,
converges globally to $\Gamma$,
and converges locally to~$\Lambda$.
\end{thm}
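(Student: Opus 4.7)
The plan is to extend the iterated-substitution idea behind Propositions~\ref{propLimUp} and~\ref{propEveryLength} to handle countably many scales along with the global and local limits simultaneously. The key observation is that in a deeply nested substitution $A_\Gamma\bigl[A_r[A_{r-1}[\cdots[A_1[C]]\cdots]]\bigr]$, a random sample of width $w$ essentially reveals the pattern of exactly one layer: the one whose block size brackets $w$ from above and whose children bracket it from below. Since $\{f_t\}$ is totally ordered by $\ll$, I can arrange any finite subset of the scales into a chain and pick block sizes that interleave with them.

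For each $\ell$ I fix a finite ``active set'' $T(\ell)\subseteq\bbN$ with $|T(\ell)|\to\infty$ and with every $t\in\bbN$ eventually belonging to $T(\ell)$. The scales $\{f_t:t\in T(\ell)\}$ are arranged in order of domination as $g_1\ll g_2\ll\cdots\ll g_r$, with the corresponding scale limits relabelled $\Xi^{(1)},\ldots,\Xi^{(r)}$. By Theorem~\ref{thmLimScaleAny} together with Proposition~\ref{propEveryLength}, for any scale limit $\Xi$ and any scaling function $h$ there is a length-parametrised sequence $(\alpha_u)_{u\in\bbN}$ with $|\alpha_u|=u$ converging to $\Xi$ at scale $h$; by Proposition~\ref{propEveryLengthLocal} there is a sequence $(\gamma_u)$ with $|\gamma_u|=u$ converging locally to $\Lambda$; and a $\Gamma$-random permutation of length $u$ converges globally to $\Gamma$. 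These are the building blocks.

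I then set
\[
\tau_\ell \;=\; \psi_\ell \:\oplus\: A_\Gamma\bigl[A_r[A_{r-1}[\cdots[A_1[C]]\cdots]]\bigr],
\]
where $C$ of length $N_0$ is drawn from the local-limit sequence, each $A_j$ of length $d_j$ is drawn from a scale-limit sequence for $\Xi^{(j)}$, $A_\Gamma$ is a $\Gamma$-random permutation of length $d_{r+1}$, and $\psi_\ell$ is a direct-sum padding of length $o(\ell)$ chosen so that $|\tau_\ell|=\ell$. Writing $B_j=d_jd_{j-1}\cdots d_1N_0$ for the length of a level-$j$ block (a copy of $A_j[\cdots[A_1[C]]]$), I choose $N_0,d_1,\ldots,d_{r+1}$ so that the interleaving $B_{j-1}\ll g_j(\ell)\ll B_j$ holds for each $j$, and so that $A_j$ converges to $\Xi^{(j)}$ at scale $g_j(\ell)/B_{j-1}$, which is a valid scaling function in $d_j$ since $1\ll g_j/B_{j-1}\ll d_j$.

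Verification then follows the standard argument laid out after Proposition~\ref{propNKFAsymptotics}. For a fixed pattern $\pi$ of length $k$, and for each of the three sampling types, the probability that a uniformly drawn $K$ is \emph{good} (falls in the right layer) tends to $1$: for a scale-$f_t$ sample, $K$ lies inside one level-$(r+1-j)$ block and spans many level-$(r+2-j)$ sub-blocks, where $j$ is the rank of $t$ in $T(\ell)$; for a global sample, $K$ hits $k$ distinct level-$1$ sub-blocks of $A_\Gamma$; for a local sample, $K$ lies inside a single copy of $C$. These good-event probabilities are controlled by Proposition~\ref{propNKFAsymptotics} together with the block-size interleaving. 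Conditional on goodness, the density of $\pi$ in $\tau_\ell$ equals the density of $\pi$ in the relevant building block at the relevant effective scale, which by construction converges to the prescribed limit. The main obstacle is the joint parameter choice with $|T(\ell)|\to\infty$: the constraints linking $d_j$ involve only the two adjacent scales $g_{j-1},g_j$ (and $d_{j-1}$), so a diagonal choice of $|T(\ell)|$ slow enough leaves adequate ``room'' between consecutive members of the totally ordered chain $g_1\ll\cdots\ll g_r$ for suitable $d_j$ to exist, using the same sort of interpolation device as in Propositions~\ref{propLimDown}--\ref{propLimUp} and the $h=\sqrt{\min(g,f)}$ trick of Theorem~\ref{thmLimScaleAny}.
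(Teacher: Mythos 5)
Your proposal is correct and follows essentially the same route as the paper: build each term by iterated substitution with a local-limit block innermost, a globally convergent (e.g.\ $\Gamma$-random) block outermost, and blocks for a growing finite set of the scales in between, ordered by domination, with block sizes interleaving the scales so that a sample of width $f_t$ almost surely isolates the corresponding layer, and with each scale-limit block drawn (via Theorem~\ref{thmLimScaleAny}/Proposition~\ref{propEveryLength}) from a sequence converging at the induced effective scale. The paper makes your ``diagonal'' parameter choice explicit by inserting intermediate scales $h_\ell=\sqrt{f^{\ell-1}f^\ell}$ and choosing a threshold $N_m$ so every consecutive ratio is at least $m$, but this is an implementation of the same idea.
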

\begin{proof}
We build a suitable sequence $(\tau_m)_{m\in\bbN}$ by iteratively combining the following \emph{component sequences} using substitution:
\begin{bullets}
  \item Let $(\sigma_0^j)_{j\in\bbN}$ be a convergent sequence of permutations with global limit $\Gamma$, such that $|\sigma_0^j|=j$ for each $j\in\bbN$.
  A $\Gamma$-random sequence would suffice.
  \item For each $t\in\bbN$,
  given a scaling function $e_t$ to be specified below,
  let $(\sigma_t^j)_{j\in\bbN}$ be a sequence of permutations that converges to~$\Xi_t$ at scale~$e_t$, such that $|\sigma_t^j|=j$ for each $j\in\bbN$.
  Proposition~\ref{propEveryLength} guarantees that such a sequence exists.
  \item Let $(\sigma_\infty^j)_{j\in\bbN}$ be a locally convergent sequence of permutations with local limit $\Lambda$, such that $|\sigma_\infty^j|=j$ for each $j\in\bbN$.
  The existence of such a sequence is guaranteed by Proposition~\ref{propEveryLengthLocal}.
\end{bullets}
Fix an index $m\geqs1$, and let $f^1\gg f^2\gg \ldots \gg f^m$ (with superscript indices) be the total ordering by domination of the $m$ scaling functions $f_1,\ldots,f_m$.
So, for each ``level'' $\ell\in[m]$, there is a distinct $t(\ell,m)\in[m]$ such that $f^\ell=f_{t(\ell,m)}$.
Similarly, for each $t\leqs m$, there is a distinct level $\ell(t,m)\in[m]$ such that $f^{\ell(t,m)}=f_{t}$.

Note that we cannot assume that the scaling functions are initially ordered by domination $f_1\gg f_2\gg\ldots$, since that would
restrict the applicability of the theorem to sets of scaling functions that are {well-ordered by domination with order type $\omega$}. Neither example in Section~\ref{sectExamples} below satisfies this restriction, both having a set of scaling functions of the form $\{n^\alpha:\alpha\in A\}$ for a set $A$ dense in $(0,1)$.

Let $f^0=n$ and $f^{m+1}=1$, and for each $\ell\in[m+1]$, let $h_\ell=\sqrt{f^{\ell-1}\+f^{\ell}}$ be a scaling function that is dominated by $f^{\ell-1}$ and dominates~$f^{\ell}$.
Let $h_0=n$ and $h_{m+2}=1$.
Thus, we have the following ordering:
\[
n \:=\: h_0 \:=\: f^0 \:\gg\: h_1 \:\gg\: f^1 \:\gg\: h_2 \:\gg\: f^2 \:\gg\: \:\ldots\: \:\gg\: f^m \:\gg\: h_{m+1} \:\gg\: f^{m+1} \:=\: h_{m+2} \:=\: 1  .
\]
For each $\ell\in[0,m+1]$, let $g_\ell=h_\ell/h_{\ell+1}$ be the ``size of the gap'' between $h_\ell$ and $h_{\ell+1}$.
Note that $h_\ell=\prod_{r=\ell}^{m+1} g_r$.

We want each gap to grow sufficiently fast with $m$. Let
\[
N_m \;=\; \min \left\{N\in\bbN \::\: \frac{f^{\ell-1}(n)}{h_\ell(n)}\geqs m\: \text{~and~} \:\frac{h_{\ell}(n)}{f^\ell(n)}\geqs m\: \text{~for all $n\geqs N$ and each $\ell\in[m+1]$}\right\}.
\]
This definition is valid because $f^{\ell-1}\gg h_\ell\gg f^\ell$ for each $\ell$.

Now, for each $\ell\in[0,m+1]$, let $M^\ell_m=\ceil{g_\ell(N_m)}$.
For $\ell\in[m]$, this is always at least $m^2$, while $M^0_m$ and $M^{m+1}_m$ are no less than $m$.
These are the sizes of the terms from the component sequences
to be used in the construction.
Note that for each $\ell$, we have $h_\ell(N_m)\sim\prod_{r=\ell}^{m+1} M^r_m$ since their ratio is less than $\big(\frac{m+1}{m}\big)^{\!2}\big(\frac{m^2+1}{m^2}\big)^{\!m}\sim 1$.

For each $\ell\in[m]$, let $\lambda^m_\ell=\sigma_{t(\ell,m)}^{M^\ell_m}$,
and let $\lambda^m_{0}=\sigma_0^{M^{0}_m}$ and $\lambda^m_{m+1}=\sigma_\infty^{M^{m+1}_m}$.
Thus, for each $t$, the sequence $\big(\lambda^m_{\ell(t,m)}\big)_{\!m\geqs t}$
is a subsequence of the component sequence $(\sigma_{t}^j)_{j\in\bbN}$, and
$\big(\lambda^m_0\big)_{\!m\in\bbN}$ and $\big(\lambda^m_{m+1}\big)_{\!m\in\bbN}$
are subsequences of $(\sigma_{0}^j)_{j\in\bbN}$ and $(\sigma_{\infty}^j)_{j\in\bbN}$, respectively.

We now assemble $\tau_m$ by iterated substitution as follows:
\[
\tau_m \;=\; \lambda^m_0 \big[ \lambda^m_1 \big] \big[ \lambda^m_2 \big] \ldots
\big[ \lambda^m_m \big] \big[ \lambda^m_{m+1} \big] .
\]
Let $n_m=\prod_{\ell=0}^{m+1} M^\ell_m$ be the length of $\tau_m$. Note that $n_m\geqs N_m$, but $n_m\sim N_m$.

Also, let $\kappa^m_\ell = \lambda^m_0 \big[ \lambda^m_1 \big] \ldots \big[ \lambda^m_{\ell-1} \big]$
and
$\mu^m_\ell = \lambda^m_{\ell+1} \big[ \lambda^m_{\ell+2} \big] \ldots \big[ \lambda^m_{m+1} \big]$,
so we have the following tripartite decomposition for any $\ell\in[m]$:
\[
\tau_m
\;=\;
\kappa^m_\ell\big[\lambda^m_\ell\big]\big[\mu^m_\ell\big]
\;=\;
\underbrace{\lambda^m_0
\big[ \lambda^m_1 \big]
\ldots
\big[ \lambda^m_{\ell-1} \big]}_{\kappa^m_\ell}
\big[ \lambda^m_\ell \big]
\underbrace{\big[ \lambda^m_{\ell+1} \big]
\ldots
\big[ \lambda^m_{m+1} \big]}_{\mu^m_\ell}
.
\]

\textbf{Local convergence.}
Since $\lambda^m_{m+1}=\sigma_\infty^{M^{m+1}_m}$,
to prove that $(\tau_m)_{m\in\bbN}$ converges locally to $\Lambda$,
it suffices to show, for all~$k$ and every permutation~$\pi$ of length $k$, that $\rho_k(\pi,\tau_m)\sim\rho_k(\pi,\lambda^m_{m+1})$.
Now, $\rho_k(\pi,\tau_m) = \prob{\tau_m(K) = \pi}$, where $K$ is drawn uniformly from the subintervals of $[n_m]$ of width $k$.
%Observe that $\tau_m=\kappa^m_{m+1}\big[\lambda^m_{m+1}\big]$
Since $\tau_m$
is constructed from copies of $\lambda^m_{m+1}$, each of width $M^{m+1}_m\geqs m$,
the probability that $K$ contains points from two copies of $\lambda^m_{m+1}$ is bounded above by $k/m$, which tends to zero since $k$ is a constant.
Thus,
$\liminfty[m]\rho_k(\pi,\tau_m) = \liminfty[m]\rho_k(\pi,\lambda^m_{m+1})$. %, as required.

\textbf{Global convergence.}
To prove that $(\tau_m)_{m\in\bbN}$ converges to $\Gamma$,
it suffices to show that $\rho(\pi,\tau_m)\sim\rho(\pi,\lambda^m_0)$ for every permutation~$\pi$.
Suppose $|\pi|=k$.
Then $\rho(\pi,\tau_m) = \prob{\tau_m(K) = \pi}$, where $K$ is drawn uniformly from the \mbox{$k$-element} subsets of $[n_m]$.
%Observe that $\tau_m=\lambda^m_0\big[\mu^m_0\big]$
Since $\tau_m$
is constructed from $M^0_m\geqs m$ copies of $\mu^m_0$, the probability that $K$ contains two or more points from the same copy of $\mu^m_0$ is bounded above by $\binom{k}{2}/{m}$. %, since the probability of any two points being from the same block is less than $1/m$.
Thus,
$\liminfty[m]\rho(\pi,\tau_m) = \liminfty[m]\rho(\pi,\lambda^m_0)$. %, as required.

\textbf{Convergence at other scales.}
Fix $t\geqs1$ and let $\ell_m=\ell(t,m)$, so $f^{\ell_m}=f_t$.
Now
$\big(\lambda^m_{\ell_m}\big)_{\!m\geqs t}$
consists of terms from $(\sigma_{t}^j)_{j\in\bbN}$. So
to prove that $(\tau_m)_{m\in\bbN}$ converges to
$\Xi_t$
at scale $f_t$,
it suffices to show that we can choose a scaling function $e_t$
in such a way that $\rho_{f_t}(\pi,\tau_m)\sim\rho_{e_t}\big(\pi,\lambda^m_{\ell_m}\big)$
for every permutation~$\pi$.
Suppose $|\pi|=k$.
Then $\rho_{f_t}(\pi,\tau_m) = \prob{\tau_m(K) = \pi}$, where $K$ is drawn uniformly from the \mbox{$k$-element} subsets of $[n_m]$ of width no greater than $f_t(n_m)$.

Recall that $\tau_m=\kappa^m_{\ell_m}\big[\lambda^m_{\ell_m}\big]\big[\mu^m_{\ell_m}\big]$.
Thus $\tau_m$
\begin{bullets}
\item is constructed from copies of $\lambda^m_{\ell_m}\big[\mu^m_{\ell_m}\big]$,
of length $u_m = \prod_{\ell=\ell_m}^m M^\ell_m\sim h_{\ell_m}(n_m)$,
\item each of which is formed of copies of $\mu^m_{\ell_m}$,
of length $w_m = \prod_{\ell=\ell_m+1}^m M^\ell_m\sim h_{\ell_m+1}(n_m)$.
\end{bullets}
So the probability that $K$ contains points from two copies of $\lambda^m_{\ell_m}\big[\mu^m_{\ell_m}\big]$ is bounded above by $f_t(n_m)/u_m$, which tends to zero because $u_m\sim h_{\ell_m}(n_m)$ and, from the definition of $N_m$, we know that $h_{\ell_m}(n_m)/f_t(n_m)\geqs m$.
Moreover,
the probability that $K$ contains two or more points from the same copy of $\mu^m_{\ell_m}$ is bounded above by $\binom{k}{2}\frac{w_m}{f_t(n_m)}$,
which also tends to zero since $w_m\sim h_{\ell_m+1}(n_m)$
and, from the definition of $N_m$, we know that $f_t(n_m)/h_{\ell_m+1}(n_m)\geqs m$.

To conclude, we define $e_t$ so that $e_t(u_m)=f_t(n_m)$.
This defines a valid scaling function since $f_t(n_m)\ll h_{\ell_m}(n_m)\sim u_m$ and thus $e_t\ll n$, and
yields
$\liminfty[m]\rho_{f_t}(\pi,\tau_m)=\liminfty[m]\rho_{e_t}(\pi,\lambda^m_{\ell_m})$. %, as required.
\end{proof}

% ------------------------
\subsection{Examples}\label{sectExamples}

\begin{exampleO}
By appropriately choosing each %$\Gamma_t$
$\Xi_t$ in the statement of Theorem~\ref{thmIndependence}
to be %the scalable limit corresponding to
either the increasing or the decreasing permuton, we can construct
a sequence of permutations $(\zeta_j)_{j\in\bbN}$ such that, for each irreducible $p/q\in\bbQ\+\cap\+(0,1]$, we have the following:
\begin{bullets}
  \item If $q$ is odd, then $(\zeta_j)$ converges at scale $n^{p/q}$ to the increasing permuton \gcone{1}{1}$\!$.
  \item If $q$ is even, then $(\zeta_j)$ converges at scale $n^{p/q}$ to the decreasing permuton \gcone{1}{-1}$\!$.
\end{bullets}
Thus, a length $k$ subpermutation of $\zeta_j$ of width at most $|\zeta_j|^{p/q}$ chosen uniformly at random is
asymptotically almost surely the increasing permutation $12\ldots k$ if $q$ is odd, and is
asymptotically almost surely the decreasing permutation $k\ldots21$ if $q$ is even.
\end{exampleO}

\begin{exampleO}
A \emph{skinny monotone grid class} is a set of permutations defined by a $\pm1$ vector.
Given such a vector $\mathbf{v}=(v_1,\ldots,v_d)$, let $\Gamma_\mathbf{v}$ be the tiered permuton with $d$ tiers of equal height numbered from top to bottom, such that in tier $i$ the mass is on the increasing diagonal if $v_i=1$ and on the decreasing diagonal if $v_i=-1$.
The permuton $\Gamma_{(1,1,-1)}$ is shown at the right of Figure~\ref{figTieredPermutons}.

Given a tiered permuton $\Gamma_\mathbf{v}$, the skinny monotone grid class $\Grid(\mathbf{v})$ contains every permutation that can be sampled from $\Gamma_\mathbf{v}$ as described in Section~\ref{sectGlobalConv} (that is, it consists of every possible \mbox{$\Gamma_\mathbf{v}$-random} permutation).
For more on skinny grid classes, see~\cite[Chapter 3]{BevanThesis} and~\cite{BS2019}.
We can assemble a sequence of permutations $(\eta_j)_{j\in\bbN}$ so that, for every skinny monotone grid class $\Grid(\mathbf{v})$, there is a scale $f_\mathbf{v}$ such that $(\eta_j)$ converges at scale $f_\mathbf{v}$ to $\Gamma_\mathbf{v}$.

First, we associate to each skinny monotone grid class $\Grid(\mathbf{v})$ a unique value $\alpha(\mathbf{v})\in(0,1)$, defined by $\alpha(\mathbf{v}) = \half\big(1+\sum_{i=1}^d {v}_i/2^i\big)$. For example, $\alpha(1,1,-1)=\frac{13}{16}$.
Note that $\alpha$ takes values dense in $(0,1)$.
Let $f_\mathbf{v}(n)=n^{\alpha(\mathbf{v})}$.
Then, by choosing each
$\Xi_t$ in the statement of Theorem~\ref{thmIndependence}
to be the tiered permuton of a distinct skinny monotone grid class,
we can construct $(\eta_j)$ so that,
for each $\pm1$ vector $\mathbf{v}$,
a length~$k$ subpermutation of $\eta_j$ of width at most $|\eta_j|^{\alpha(\mathbf{v})}$
chosen uniformly at random
is asymptotically almost surely a permutation in $\Grid(\mathbf{v})$.
\end{exampleO}

% ------------------------
\subsection{Independence in two directions}

Our final goal is to prove that we can choose limits at a countably infinite number of scales independently both for a sequence $(\sigma_j)_{j\in\bbN}$ and for its \emph{inverse sequence} $(\sigma_j^{-1})_{j\in\bbN}$.

To do this, we use the following operation on permutations.
Given permutations $\sigma\in S_k$ and $\tau\in S_\ell$, we define their \emph{box product} $\sigma\boxdot\tau$ to be the permutation of length $k\ell$ satisfying, for each index $i$,
\[
\big(\sigma\boxdot\tau\big)(i)
\;=\;
\ell\Big(\sigma\big((i-1\bmod k)+1\big)-1\Big) \:+\: \tau\big(\!\floor{(i-1)/k}+1\big) .\footnote{If the convention were for indices and values of a $k$-permutation to range from $0$ to $k-1$, rather than from $1$ to $k$, then we would simply have $(\sigma\boxdot\tau)(i)=\ell\+\sigma(i\bmod k) + \tau(\floor{i/k})$, which is rather more insightful.}
\]
\begin{figure}[t] %[ht]
\begin{center}
\begin{tikzpicture}[scale=0.3]
\draw[thin,dashed] (3.5,0.7)--(3.5,12.3);
\draw[thin,dashed] (6.5,0.7)--(6.5,12.3);
\draw[thin,dashed] (9.5,0.7)--(9.5,12.3);
\draw[thin,dotted] (0.7,4.5)--(12.3,4.5);
\draw[thin,dotted] (0.7,8.5)--(12.3,8.5);
\plotpermthinbox{12}{8, 4, 12, 7, 3, 11, 5, 1, 9, 6, 2, 10}
\end{tikzpicture}
\caption{The plot of the box product $213\boxdot4312$: each vertical strip is a copy of $213$ and each horizontal strip is copy of $4312$}\label{figBoxProduct}
\end{center}
\end{figure}
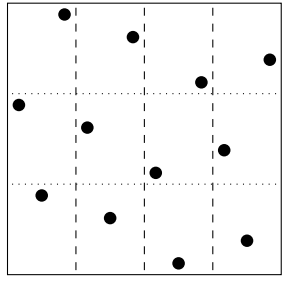
The box product is designed so that it consists of the juxtaposition of $\ell$ copies of $\sigma$, and so that its inverse $(\sigma\boxdot\tau)^{-1}$ consists of $k$ copies of $\tau^{-1}$.
See Figure~\ref{figBoxProduct} for an illustration.

We first establish that limits at a given scale can be chosen independently for a sequence and its inverse sequence.
\begin{prop}\label{prop2D}
  Let $\Xi$ and $\Xi'$ be any %two %(not necessarily distinct)
  scale limits, and $f$ be any scaling function.
  Then there exists a sequence of permutations $(\vphi_j)_{j\in\bbN}$ convergent to $\Xi$ at scale~$f$ such that $(\vphi_j^{-1})_{j\in\bbN}$ converges to $\Xi'$ at scale $f$. %, and $|\vphi_j|=j$ for each~$j$.
\end{prop}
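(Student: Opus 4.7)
The plan is to use the box product, whose inverse is again a box product with a symmetric structure: since $(\sigma\boxdot\tau)^{-1}=\tau^{-1}\boxdot\sigma^{-1}$, the vertical strips of $\sigma\boxdot\tau$ consist of copies of $\sigma$, while those of $(\sigma\boxdot\tau)^{-1}$ consist of copies of $\tau^{-1}$. This symmetry lets a single construction simultaneously control limits of a sequence and of its inverse.

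Applying a box product $\omega_j=\alpha_j\boxdot\beta_j$ with $|\alpha_j|\sim|\beta_j|$ requires a random width-$\leqs f(|\omega_j|)$ window to lie inside a single vertical strip both in $\omega_j$ and in $\omega_j^{-1}$; both strip widths must dominate $f$, forcing $f(n)\ll\sqrt{n}$, which need not hold. I would therefore first reduce to a smaller scale: fix any scaling function $e$ with $e\ll f$ and $e\ll\sqrt{n}$ (for example, $e(n)=\sqrt{\min(f(n),\sqrt{n})}$), and set $\widetilde{e}(j)=e(j^2)$, which is a valid scaling function since $e(j^2)\ll j$.

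By Theorem~\ref{thmLimScaleAny} and Proposition~\ref{propEveryLength}, obtain length-$j$ sequences $(\alpha_j)$ and $(\gamma_j)$ converging at scale $\widetilde{e}$ to $\Xi$ and to $\Xi'$ respectively. Put $\beta_j=\gamma_j^{-1}$ and $\omega_j=\alpha_j\boxdot\beta_j$, a permutation of length $j^2$, so $\omega_j^{-1}=\gamma_j\boxdot\alpha_j^{-1}$. Following the argument scheme outlined earlier: a uniformly random $k$-subset $K\subseteq[j^2]$ of width $\leqs e(j^2)$ lies in one of the $j$ vertical strips of $\omega_j$ (each of width $j$) with probability $1-O(e(j^2)/j)\to1$, and conditional on this, $K$ is uniform over width-$\leqs\widetilde{e}(j)$ subsets of a single copy of $\alpha_j$; hence $\rho_e(\pi,\omega_j)\sim\rho_{\widetilde{e}}(\pi,\alpha_j)\to\Xi_\pi$. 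The same argument applied to $\omega_j^{-1}=\gamma_j\boxdot\alpha_j^{-1}$ gives convergence of $(\omega_j^{-1})$ at scale $e$ to $\Xi'$.

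Finally, lift from scale $e$ to scale $f$ using substitution, as in Proposition~\ref{propLimUp}. Let $\vphi_j=\omega_j[\psi_j]$ where $\psi_j$ is an arbitrary permutation of the length $\ceil{D(j^2)}$ prescribed by that proposition (for source scale $e$ and target scale $f$). Since substitution distributes over inversion, $\vphi_j^{-1}=\omega_j^{-1}[\psi_j^{-1}]$; applying Proposition~\ref{propLimUp} to $(\omega_j)$ and to $(\omega_j^{-1})$ (with substituents $\psi_j$ and $\psi_j^{-1}$ respectively, both of length $\ceil{D(j^2)}$) yields convergence of $(\vphi_j)$ and $(\vphi_j^{-1})$ at scale $f$ to $\Xi$ and $\Xi'$ respectively. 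The main technical hurdle is the box-product analysis above, whose success rests on the symmetric strip structure under inversion combined with the preliminary reduction $e\ll\sqrt{n}$.
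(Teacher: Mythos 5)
Your proposal is correct, and its engine is the same as the paper's: form the box product of one component sequence with the inverse of the other, so that the vertical strips of $\omega_j=\alpha_j\boxdot\gamma_j^{-1}$ are copies of $\alpha_j$ while the vertical strips of $\omega_j^{-1}=\gamma_j\boxdot\alpha_j^{-1}$ are copies of $\gamma_j$ (your identity $(\sigma\boxdot\tau)^{-1}=\tau^{-1}\boxdot\sigma^{-1}$ checks out directly from the definition), and then run the standard strip-crossing estimate. Where you genuinely diverge is in how the scale $f$ is reached. The paper takes its two length-$j$ component sequences to converge at scale $g(n)=f(n^2)$ and forms the box product once, concluding convergence at scale $f$ from the bound $f(j^2)/j$ on the probability that the window straddles two strips; this needs $f(j^2)\ll j$ (indeed it needs $g$ to be a scaling function at all), which holds only when $f$ grows more slowly than $\sqrt{n}$ — for $f(n)=n/\log n$, say, $f(j^2)/j$ diverges. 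Your two-step variant — first running the box-product argument at a deliberately small scale $e\ll\min(f,\sqrt{n})$, where $e(j^2)/j$ certainly vanishes, and then lifting from scale $e$ to scale $f$ by the substitution construction of Proposition~\ref{propLimUp}, using $\sigma[\tau]^{-1}=\sigma^{-1}[\tau^{-1}]$ and the fact that the block length $\ceil{D(j^2)}$ there depends only on the scales and the length, so the same lift serves the sequence and its inverse simultaneously — removes that restriction and delivers the statement for every scaling function. The cost is that you invoke the construction inside Proposition~\ref{propLimUp} rather than just its statement, which is legitimate; what you buy is an argument that covers scales $f$ beyond $\sqrt{n}$, which the paper's one-step proof does not literally handle.
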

\begin{proof}
  Let $g$ be the scaling function defined by $g(n)=f(n^2)$, and suppose $(\sigma_j)_{j\in\bbN}$ and $(\tau_j)_{j\in\bbN}$ converge at scale $g$ to $\Xi$ and $\Xi'$, respectively, with $|\sigma_j|=|\tau_j|=j$ for each $j$.
  Proposition~\ref{propEveryLength} guarantees the existence of these sequences.

  Consider the sequence of permutations $(\vphi_j)_{j\in\bbN}$ where $\vphi_j=\sigma_j\boxdot\tau_j^{-1}$ for each $j$.
  Note that $\vphi_j$ has length $j^2$ and consists of $j$ copies of $\sigma_j$, while $\vphi_j^{-1}$ consists of $j$ copies of $\tau_j$.
  We claim that $(\vphi_j)_{j\in\bbN}$ converges at scale $f$ to~$\Xi$.

It suffices to prove that, for any pattern $\pi$, we have $\rho_{f}(\pi,\vphi_j)\sim\rho_g(\pi,\sigma_j)$.
Suppose $|\pi|=k$.
Then $\rho_{f}(\pi,\vphi_j) = \prob{\vphi_j(K) = \pi}$, where $K$ is drawn uniformly from the \mbox{$k$-element} subsets of $[j^2]$ of width no greater than $f(j^2)=g(j)$.
Since $\vphi_j$ consists of copies of $\sigma_j$, each of length $j$,  the probability that $K$ contains points from two copies of $\sigma_j$ is bounded above by $f(j^2)/j=g(j)/j\ll1$.
Thus,
$\liminfty[j]\rho_{f}(\pi,\vphi_j)=\liminfty[j]\rho_g(\pi,\sigma_j)$.

By a symmetrical argument, we also have $\liminfty[j]\rho_{f}(\pi,\vphi_j^{-1})=\liminfty[j]\rho_g(\pi,\tau_j)$.
\end{proof}

The analogous result holds for local limits.
\begin{prop}\label{prop2DLocal}
  Let $\Lambda$ and $\Lambda'$ be any %two (not necessarily distinct)
  permutation local limits.
  Then there exists a sequence of permutations $(\vphi_j)_{j\in\bbN}$ that converges locally to $\Lambda$ such that $(\vphi_j^{-1})_{j\in\bbN}$ converges locally to~$\Lambda'$.
\end{prop}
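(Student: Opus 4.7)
The plan is to mirror the proof of Proposition~\ref{prop2D}, substituting locally convergent building-block sequences for the scale-convergent ones. Specifically, apply Proposition~\ref{propEveryLengthLocal} to obtain sequences $(\sigma_j)_{j\in\bbN}$ and $(\tau_j)_{j\in\bbN}$ of permutations, with $|\sigma_j|=|\tau_j|=j$, that converge locally to $\Lambda$ and $\Lambda'$ respectively. Then define $\vphi_j=\sigma_j\boxdot\tau_j^{-1}$, a permutation of length $j^2$. By the defining property of the box product (juxtaposed in the domain, with inverse juxtaposed the other way), $\vphi_j$ consists of $j$ consecutive copies of $\sigma_j$, and using $(\sigma\boxdot\tau)^{-1}$ being built from $k$ copies of $\tau^{-1}$ together with the fact that inversion is involutive, $\vphi_j^{-1}$ consists of $j$ consecutive copies of $\tau_j$.

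The convergence check is the same abbreviated template used throughout the paper. Fix a pattern $\pi$ with $|\pi|=k$. Then $\rho_k(\pi,\vphi_j)=\prob{\vphi_j(K)=\pi}$ where $K$ is a uniformly random length-$k$ subinterval of $[j^2]$. Call $K$ \emph{good} if it lies entirely inside a single one of the $j$ blocks (copies of $\sigma_j$) that make up $\vphi_j$. The number of bad starting positions is at most $(j-1)(k-1)$ out of $j^2-k+1$ possible starting positions, so the probability that $K$ is bad is $O(k/j)\to0$. Conditional on $K$ being good, $K$ is uniform over length-$k$ consecutive windows in a copy of $\sigma_j$, so the conditional distribution of $\vphi_j(K)$ is exactly the distribution of a uniform consecutive length-$k$ pattern of $\sigma_j$. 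It follows that $\rho_k(\pi,\vphi_j)=(1+o(1))\rho_k(\pi,\sigma_j)+o(1)$, and hence $\liminfty[j]\rho_k(\pi,\vphi_j)=\Lambda_\pi$.

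An entirely symmetric argument, applied to $\vphi_j^{-1}$ as a juxtaposition of $j$ copies of $\tau_j$, yields $\liminfty[j]\rho_k(\pi,\vphi_j^{-1})=\Lambda'_\pi$ for every pattern $\pi$. This establishes the two required local convergences simultaneously.

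There is really no main obstacle here: the construction is a direct transposition of Proposition~\ref{prop2D}'s strategy, and the only quantitative estimate needed is the trivial bound that a length-$k$ window crosses a block boundary with probability $O(k/j)$, which is even simpler than the $f(n)$-width estimates used at scale $f$, since consecutive windows have fixed width~$k$. The one small thing to keep track of is that Proposition~\ref{propEveryLengthLocal} really does provide sequences indexed by every positive integer $j$, so that the box product of $\sigma_j$ with a permutation of length $j$ has length $j^2$ and the number of copies equals the common length $j$, making the crossing probability $O(k/j)$ tend to zero.
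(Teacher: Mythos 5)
Your proposal is correct and follows essentially the same route as the paper: form $\vphi_j=\sigma_j\boxdot\tau_j^{-1}$ from locally convergent sequences and observe that a random length-$k$ window crosses a block boundary with probability $O(k/|\sigma_j|)\to0$, with the symmetric argument for $\vphi_j^{-1}$. The only (harmless) difference is that you invoke Proposition~\ref{propEveryLengthLocal} to get $|\sigma_j|=|\tau_j|=j$, which is not actually needed here since, unlike in Proposition~\ref{prop2D}, no scale-matching between the lengths is required.
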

\begin{proof}
Suppose $(\sigma_j)_{j\in\bbN}$ and $(\tau_j)_{j\in\bbN}$ converge locally to $\Lambda$ and $\Lambda'$, respectively.
For each $j\in\bbN$, let $\vphi_j=\sigma_j\boxdot\tau_j^{-1}$ for each $j$.
Note that $\vphi_j$ has length $|\sigma_j||\tau_j|$ and consists of $|\tau_j|$ copies of $\sigma_j$, while $\vphi_j^{-1}$ consists of $|\sigma_j|$ copies of $\tau_j$.
We claim that $(\vphi_j)_{j\in\bbN}$ converges locally to~$\Lambda$.

It suffices to prove that, for any pattern $\pi$, we have $\rho_{|\pi|}(\pi,\vphi_j)\sim\rho_{|\pi|}(\pi,\sigma_j)$.
Suppose $|\pi|=k$.
Then $\rho_k(\pi,\vphi_j) = \prob{\vphi_j(K) = \pi}$, where $K$ is drawn uniformly from the subintervals of $[|\vphi_j|]$ of width $k$.
Now, the probability that $K$ contains points from two copies of $\sigma_j$ is bounded above by $k/|\sigma_j|\ll1$, since $k$ is constant and $|\sigma_j|\to\infty$.
Thus,
$\liminfty[j]\rho_k(\pi,\vphi_j)=\liminfty[j]\rho_k(\pi,\sigma_j)$.

By a symmetrical argument, we also have $\liminfty[j]\rho_k(\pi,\vphi_j^{-1})=\liminfty[j]\rho_k(\pi,\tau_j)$.
\end{proof}
Note that no similar result for global convergence is possible. If $(\sigma_j)_{j\in\bbN}$ converges globally to a permuton $\Gamma$, then $(\sigma_j^{-1})_{j\in\bbN}$ converges globally to $\Gamma^{-1}$, the reflection of $\Gamma$ about the main diagonal.

Recall that permutation inversion distributes over both direct sum and substitution. %: $(\sigma\oplus\tau)^{-1}=\sigma^{-1}\oplus\tau^{-1}$, and $\sigma[\tau]^{-1}=\sigma^{-1}[\tau^{-1}]$.
As a result, if a sequence $(\tau_m)_{m\in\bbN}$ is constructed from component sequences by these operations, %direct sum and substitution,
then any relationship between a limit of $(\tau_m)_{m\in\bbN}$ and a limit of one of the component sequences $(\sigma_j)_{j\in\bbN}$ also holds between
the corresponding limits of $(\tau_m^{-1})_{m\in\bbN}$ and $(\sigma_j^{-1})_{j\in\bbN}$.
This observation means that each result above can trivially be adapted to incorporate this two-dimensional perspective and enables us to extend our main result to limits in two directions.
\begin{thm}\label{thm2DIndependence}
Let $\{f_t : t\in\bbN \}$ be any countably infinite set of scaling functions totally ordered by domination.
For each $t\in\bbN$, let $\Xi_t$ and $\Xi'_t$ be any scale limits.
Let $\Gamma$ be any permuton, and let $\Lambda$ and $\Lambda'$ be any permutation local limits.
Then there exists a sequence of permutations $(\tau_j)_{j\in\bbN}$ which
converges to~$\Xi_t$ at scale $f_t$ for every $t$,
converges globally to $\Gamma$,
and converges locally to~$\Lambda$, such that
$(\tau_j^{-1})_{j\in\bbN}$ converges to~$\Xi'_t$ at scale $f_t$ for every $t$ and converges locally to~$\Lambda'$.
\end{thm}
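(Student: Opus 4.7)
The plan is to recycle the proof of Theorem~\ref{thmIndependence} almost verbatim, replacing each component sequence by a \emph{two-sided} version whose inverse sequence converges to the desired inverse target, and then to note that the main estimates transfer by symmetry.

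The key observation is that both of the operations used in the construction of $\tau_m$, namely direct sum and substitution, distribute over permutation inversion: $(\sigma\oplus\tau)^{-1}=\sigma^{-1}\oplus\tau^{-1}$ and $\sigma[\tau]^{-1}=\sigma^{-1}[\tau^{-1}]$. Thus if $\tau_m=\lambda^m_0[\lambda^m_1]\cdots[\lambda^m_{m+1}]$, then $\tau_m^{-1}=(\lambda^m_0)^{-1}[(\lambda^m_1)^{-1}]\cdots[(\lambda^m_{m+1})^{-1}]$, with exactly the same block sizes at every level. Consequently every probability estimate in the proof of Theorem~\ref{thmIndependence}---bounding the chance that a uniformly chosen $k$-subset of the index set spans two copies of a block, or contains multiple points within a single lowest-level block---depends only on block sizes and on the chosen scale, and so applies to $\tau_m^{-1}$ unchanged. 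The forward convergence of $(\tau_m^{-1})$ at each scale and locally therefore reduces to the forward convergence of the inverses of the component sequences.

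I would build the enhanced components as follows. For the global limit, take $(\sigma_0^j)$ to be any $\Gamma$-random sequence; the statement imposes no constraint on the global limit of the inverse sequence, which is automatically $\Gamma^{-1}$-random. For local convergence, Proposition~\ref{prop2DLocal} produces a sequence with local limit $\Lambda$ whose inverse has local limit $\Lambda'$, and rerunning the construction of Proposition~\ref{propEveryLengthLocal} on top of it (an assembly using only direct sums) gives such a two-sided sequence $(\sigma_\infty^j)$ with $|\sigma_\infty^j|=j$. For each scale index $t$, Proposition~\ref{prop2D} supplies a two-sided sequence at the scale $e_t$ demanded by the proof of Theorem~\ref{thmIndependence}, albeit of lengths only $j^2$; running the construction of Proposition~\ref{propEveryLength} on top of this box-product sequence---again an assembly using only direct sum and substitution, both inversion-equivariant---produces the every-length two-sided sequence $(\sigma_t^j)$ needed.

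Substituting these enhanced components into the construction of Theorem~\ref{thmIndependence} yields a sequence $(\tau_m)$ with all the forward properties required, by exactly the same proof. The inverse properties follow by repeating each counting estimate in that proof with $\tau_m^{-1}$ in place of $\tau_m$, as justified above. The main obstacle---more a matter of care than of ideas---is the bootstrapping of Propositions~\ref{prop2D} and~\ref{prop2DLocal} into every-length two-sided form, since one must verify that the inversion-equivariance of the construction persists through the inductive padding with prefix permutations $\psi_\ell$ used in Propositions~\ref{propEveryLength} and~\ref{propEveryLengthLocal}; once that is done, the heavy machinery of Theorem~\ref{thmIndependence} transfers without modification.
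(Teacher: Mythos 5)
Your proposal is correct and follows essentially the same route as the paper: the paper's proof likewise feeds two-sided component sequences (from Propositions~\ref{prop2D} and~\ref{prop2DLocal}, together with two-dimensional extensions of Propositions~\ref{propEveryLength} and~\ref{propEveryLengthLocal}) into the construction of Theorem~\ref{thmIndependence}, using the fact that inversion distributes over direct sum and substitution so that all the counting estimates transfer to the inverse sequence unchanged. Your explicit bootstrapping of the every-length two-sided components is exactly the ``mutatis mutandis'' step the paper leaves implicit.
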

\begin{proof}
  We apply the construction from the proof of Theorem~\ref{thmIndependence} to
\begin{bullets}
  \item a sequence of permutations with global limit~$\Gamma$,
  \item for each $t\in\bbN$, a sequence of permutations that converges to~$\Xi_t$ at scale~$e_t$ whose inverse sequence converges at scale~$e_t$ to~$\Xi'$, and
  \item a sequence of permutations with local limit $\Lambda$ whose inverse sequence converges locally to~$\Lambda'$.
\end{bullets}
   Propositions~\ref{prop2D} and~\ref{prop2DLocal}, along with two-dimensional extensions of
   Propositions~\ref{propEveryLength} and~\ref{propEveryLengthLocal} guarantee the existence of suitable sequences.
   The result follows \emph{mutatis mutandis}.
\end{proof}

% ------------------------
\subsection*{Acknowledgements}
The author is grateful for the detailed feedback from three anonymous referees, which resulted in significant improvements to the paper.

\emph{Soli Deo gloria!}

% ================================================================
\bibliographystyle{plain}
%{\footnotesize\bibliography{../bib/mybib}}
{\small\bibliography{../bib/mybib}}

\end{document}